\title[Limit of Bergman kernels on a tower of coverings]{Limit of Bergman kernels on a tower of coverings of compact K\"ahler manifolds}
\author[S. Yoo]{Sungmin Yoo}
\address{Center for Complex Geometry, Institute for Basic Science (IBS), Daejeon 34126, Republic of Korea}
\email{sungmin@ibs.re.kr}
\author[J. Yum]{Jihun Yum}
\address{Center for Complex Geometry, Institute for Basic Science (IBS), Daejeon 34126, Republic of Korea}
\email{jihun0224@ibs.re.kr}
\date{\today}
\newcommand{\CC}{\mathbb{C}} 		
\newcommand{\NN}{\mathbb{N}} 		
\newcommand{\PP}{\mathbb{P}} 		
\newcommand{\re}{\mathrm{Re}} 		
\newcommand{\Aut}{\mathrm{Aut}} 	
\newcommand{\IM}{\sqrt{-1}}		 	
\newcommand{\Bergman}{\mathcal{B}} 
\newcommand{\norm}[1]{\left\|#1\right\|}                
\newcommand{\inner}[1]{\left\langle{#1}\right\rangle}   
\newtheorem*{Question}{Question}
\theoremstyle{plain}
\newtheorem{thm}{Theorem}[section]
\newtheorem{lem}[thm]{Lemma}
\newtheorem{prop}[thm]{Proposition}
\newtheorem{cor}[thm]{Corollary}
\newtheorem{mainthm}{Theorem} 
\newtheorem{maincor}[mainthm]{Corollary}
\theoremstyle{definition}
\newtheorem{defn}[thm]{Definition}
\theoremstyle{remark}
\newtheorem{rmk}[thm]{Remark}
\numberwithin{equation}{section}
\begin{document}

\maketitle

\begin{abstract}
	We prove the convergence of the Bergman kernels and the $L^2$-Hodge numbers on a tower of Galois coverings $\{X_j\}$ of a compact K\"ahler manifold $X$ converging to an infinite Galois (not necessarily universal) covering $\widetilde{X}$.
	We also show that, as an application, sections of canonical line bundle $K_{X_j}$ for sufficiently large $j$ give rise to an immersion into some projective space, if so do sections of $K_{\widetilde{X}}$.
\end{abstract}

\section{\bf Introduction}


Let $\{X_j\}$ be a tower of Galois coverings of a complex manifold $X$ converging to an infinite Galois covering $\widetilde{X}$ of $X$. A tower of coverings is said to be {\it Bergman stable} if the pull-back of the Bergman kernels of $X_j$ converges locally uniformly to that of $\widetilde{X}$ as $j \rightarrow \infty$.
A famous theorem by Kazhdan \cite{kazhdan83} states that a tower of coverings of compact Riemann surfaces with genus $g \ge 2$ converging to {\it the universal covering} is Bergman stable (see McMullen\cite{mcmullen2013entropy} and Rhodes\cite{rhodes1993sequences} for other proofs).
We remark that in contrast, T. Ohsawa \cite{Ohsawa10} constructed an example of a tower of {\it non}-Galois coverings which is not Bergman stable. 

Recently, Baik, Shokrieh and Wu generalized Kazhdan's theorem where $\widetilde{X}$ is {\it any infinite Galois covering} of a compact Riemann surface $X$ (see Theorem A in \cite{BaikShokriehWu20}).
The purpose of this work is to prove the following generalization for higher dimensional complex manifolds.

\begin{mainthm}[Theorem \ref{thm: Bergman stable}]     \label{thm A} 
    Any tower of Galois coverings of a compact K\"ahler manifold converging to an infinite Galois covering is Bergman stable. 
\end{mainthm}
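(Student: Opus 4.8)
The plan is to fix the base metric on $X$ and pull everything back, so that the Bergman kernels on the diagonal, $\Bergman_{X_j}$ and $\Bergman_{\widetilde X}$, become Hermitian metrics on the (pulled-back) canonical bundle over a fixed coordinate chart, and to prove locally uniform convergence $\Bergman_{X_j}\to\Bergman_{\widetilde X}$ there. Throughout I would use the extremal characterization $\Bergman_M(x)=\sup\{\,\norm{s(x)}^2: s\in H^0_{(2)}(M,K_M),\ \norm{s}_{L^2(M)}\le 1\,\}$, together with the group picture $X_j=\widetilde X/N_j$, where $N_j=\mathrm{Gal}(\widetilde X/X_j)$ is a descending chain of normal subgroups of $\Gamma=\mathrm{Gal}(\widetilde X/X)$ with $\bigcap_j N_j=\{e\}$; accordingly I fix an increasing exhaustion $D_1\subset D_2\subset\cdots$ of $\widetilde X$ by fundamental domains of the $N_j$. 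The first step is a normal-families reduction: the sub-mean-value inequality for the plurisubharmonic functions $\norm{s}^2$, applied in the fixed background geometry, gives uniform $C^0$- and $C^1$-bounds and equicontinuity of $\{\Bergman_{X_j}\}$ on compact sets, so by Arzel\`a--Ascoli it suffices to show that every locally uniform subsequential limit equals $\Bergman_{\widetilde X}$. This in turn reduces to the two inequalities $\limsup_j \Bergman_{X_j}\le \Bergman_{\widetilde X}$ and $\liminf_j \Bergman_{X_j}\ge \Bergman_{\widetilde X}$, pointwise at a fixed point $\tilde x$ with images $x_j\in X_j$.

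For the upper bound I would argue by compactness. Choosing near-extremal sections $s_j\in H^0_{(2)}(X_j,K_{X_j})$ with $\norm{s_j}_{L^2(X_j)}=1$ and $\norm{s_j(x_j)}^2=\Bergman_{X_j}(x_j)$, their pull-backs $\tilde s_j$ to $\widetilde X$ are $N_j$-invariant and satisfy $\int_{D_j}\norm{\tilde s_j}^2=1$. On any fixed compact set $L\ni\tilde x$ one has $L\subset D_j$ for large $j$, so $\int_L\norm{\tilde s_j}^2\le 1$, and the sub-mean-value inequality yields uniform local bounds; Montel then extracts a subsequence converging in $C^\infty_{\mathrm{loc}}$ to a holomorphic section $\tilde s$ on $\widetilde X$. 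Fatou's lemma gives $\int_L\norm{\tilde s}^2=\lim_j\int_L\norm{\tilde s_j}^2\le 1$ for every such $L$, hence $\norm{\tilde s}_{L^2(\widetilde X)}\le 1$, while $\norm{\tilde s(\tilde x)}^2=\lim_j\norm{s_j(x_j)}^2=\lim_j\Bergman_{X_j}(x_j)$. Therefore $\Bergman_{\widetilde X}(\tilde x)\ge\lim_j\Bergman_{X_j}(x_j)$, which is the desired upper bound.

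For the lower bound I would instead construct test sections on $X_j$ from an extremal section $\tilde s$ on $\widetilde X$. Fix a cut-off $\chi_j$ equal to $1$ on a large compact $L_j\uparrow\widetilde X$ containing $\tilde x$ and supported in the single fundamental domain $D_j$; then $\sigma_j=\chi_j\tilde s$ has support in $D_j$, so its trivial periodization descends to a smooth section $\hat\sigma_j$ on $X_j$ with $\hat\sigma_j(x_j)=\tilde s(\tilde x)$, $\norm{\hat\sigma_j}_{L^2(X_j)}\to 1$, and $\bar\partial\hat\sigma_j$ descending from $(\bar\partial\chi_j)\tilde s$, which is supported far from $\tilde x$ and has small $L^2$-norm. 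Projecting $\hat\sigma_j$ onto $\ker\bar\partial$ (the Bergman projection $P_j$ on $X_j$) produces a genuine holomorphic $L^2$-section, and the whole estimate comes down to showing that the correction $\hat\sigma_j-P_j\hat\sigma_j$, i.e.\ the minimal solution of $\bar\partial w_j=\bar\partial\hat\sigma_j$, is small at $x_j$. This is the main obstacle: the global $L^2$-estimate for $\bar\partial$ on the compact manifold $X_j$ degenerates as $j\to\infty$ (the spectral gap of the $\bar\partial$-Laplacian collapses to the bottom of the $L^2$-spectrum of $\widetilde X$), so one cannot simply bound $w_j$ by its global minimal-solution norm. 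I expect to resolve this by localizing the estimate away from $\mathrm{supp}(\bar\partial\hat\sigma_j)$: since this source term sits near $\partial D_j$, at distance $\to\infty$ from $x_j$, the value $\norm{w_j(x_j)}$ should be controlled by the off-diagonal decay of the Green's operator, which on $X_j$ is uniform in $j$ by finite-propagation-speed comparison with $\widetilde X$ (equivalently, via an Ohsawa--Takegoshi/H\"ormander solution with a weight that is large near $x_j$). Granting this uniform smallness, $P_j\hat\sigma_j$ is an admissible competitor with $\norm{(P_j\hat\sigma_j)(x_j)}^2\ge \Bergman_{\widetilde X}(\tilde x)-\varepsilon$ and norm $\le 1+o(1)$, giving $\liminf_j\Bergman_{X_j}(x_j)\ge\Bergman_{\widetilde X}(\tilde x)$ and completing the proof.
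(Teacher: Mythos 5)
Your upper-bound argument (normal families plus Fatou, giving $\limsup_j \Bergman_{X_j}(x_j)\le \Bergman_{\widetilde X}(\tilde x)$) is correct, but it is the easy half of the theorem: it corresponds to Proposition \ref{prop: Kazhdan inequality} in the paper, proved there by elementary monotonicity of the kernels under restriction of domains. The genuine gap is in your lower bound, and you have flagged it yourself: all of the difficulty of Kazhdan-type theorems is concentrated in the sentence ``granting this uniform smallness.'' The quantity you must control --- the value at $x_j$ of the minimal solution $w_j=\hat\sigma_j-P_j\hat\sigma_j$ --- is governed by the small spectrum of $\Delta_{\overline\partial}$ on $X_j$, and no uniform-in-$j$ control of that spectrum is available under the stated hypotheses. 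Concretely: (i) a H\"ormander/Ohsawa--Takegoshi weight argument requires strict curvature positivity, which the $K_{X_j}$-valued $(0,1)$-problem on a general compact K\"ahler manifold does not provide (no hyperbolicity or curvature sign is assumed; this is exactly why Chen--Fu had to assume K\"ahler-hyperbolicity and use Donnelly--Fefferman estimates, as the paper points out in the introduction); (ii) finite propagation speed controls only the short-time part of the heat kernel, whereas the Green's operator is the integral of $e^{-t\Delta_j}$ minus the harmonic projection over \emph{all} time, and its long-time, off-diagonal behaviour is dictated by eigenvalues tending to $0$ whose eigenforms are spread over the growing manifold $X_j$; bounding their contribution requires a quantitative relation between the small eigenvalues and $|\mathbf{G}_j|$, which is precisely the content of L\"uck-type approximation and is not a consequence of comparison with $\widetilde X$. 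So the step you defer is not a removable technicality: it is essentially equivalent to the theorem itself.

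By contrast, the paper avoids $\overline\partial$-estimates entirely. It first proves convergence of the \emph{integrated} kernels, $h^{n,0}_{(2)}(X_j)\rightarrow h^{n,0}_{(2)}(\widetilde X)$ (Theorem \ref{thm B}), by combining the Baik--Shokrieh--Wu/L\"uck approximation theorem for $L^2$-Betti numbers --- a combinatorial, von Neumann--algebraic result whose proof exploits the integrality of the cellular boundary matrices, an arithmetic input your analytic scheme has no substitute for --- with the $L^2$-Hodge--de Rham theorem, the Hodge decomposition, and the same easy one-sided inequality you proved. It then upgrades convergence of the integrals to local $C^\infty$ convergence of the kernels (Proposition \ref{prop: uniform convergence of Bergman kernels}) using only the reproducing property and Cauchy estimates. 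If you want to salvage your approach, the repair is not a better $\overline\partial$-estimate but the replacement of your lower bound by this topological input.
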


Chen and Fu also proved this type of theorem when the base manifold $X$ is possibly non-compact under the additional assumption that both top and base manifolds satisfy a certain type of K\"{a}hler-hyperbolicity condition (see Theorem 1.3 in \cite{ChenFu16}). 
They used Donnelly-Fefferman type $L^2$-estimates for the $\overline{\partial}$-Laplacian.
We emphasize that our results do not require any further assumption except that the base manifold is compact K\"ahler. 

Instead of $\overline{\partial}$-estimates, we will take a topological approach as in \cite{BaikShokriehWu20}.
Note that the Bergman stability is equivalent to the convergence of the $L^2$-Hodge numbers $h^{n,0}_{(2)}$, the von Neumann dimensions of the spaces of $L^2$-harmonic (holomorphic) $(n,0)$-forms, which is called the {\it Kazhdan equality}.
This fact was first observed by Kazhdan (cf. \cite{kazhdan83,ChenFu16}, see also Proposition \ref{prop: uniform convergence of Bergman kernels}).
If $X$ is a Riemann surface, it is enough to show the convergence of the $L^2$-Betti numbers $b_1^{(2)}$ since $b_1^{(2)}=2h^{1,0}_{(2)}$ by the Hodge decomposition.
In fact, Baik, Shokrieh and Wu proved this convergence by a variant of {\it L\"{u}ck's approximation theorem} (see Theorem 4.6 in \cite{BaikShokriehWu20}).

However, for the higher dimensional case $n>1$, the convergence of the $L^2$-Betti numbers does not directly imply that of the $L^2$-Hodge numbers $h^{n,0}_{(2)}$.
Using the theory of von Neumann algebras, we prove the following general version.

\begin{mainthm}[Kazhdan's equality for the $L^2$-Hodge numbers] \label{thm B}
Let $\{X_j\rightarrow X\}$ be a tower of Galois coverings of a compact K\"{a}hler manifold $X$ converging to an infinite Galois covering $\widetilde{X}\rightarrow X$. 
Then, for any $0\leq p,q \leq n$, we have
$$
   h^{p,q}_{(2)}(X_j) \left( = \frac{h^{p,q}(X_j)}{|\mathbf{G}_j|} \right) \longrightarrow   h^{p,q}_{(2)}(\widetilde{X}) 
$$
as $j \rightarrow \infty$, where $\mathbf{G}_j$ is the group of deck transformations of the covering $X_j\rightarrow X$, and $h^{p,q}(X_j)$ is the (ordinary) Hodge number of $X_j$.
\end{mainthm}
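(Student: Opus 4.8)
The plan is to split the asserted equality into two one-sided estimates and then fuse them using the already-available convergence of the $L^2$-Betti numbers. Write $\widehat{X}$ for the universal covering, $\Gamma=\pi_1(X)$, and let $\Gamma_j\trianglelefteq\Gamma$ and $\Gamma_\infty=\bigcap_j\Gamma_j$ be the normal subgroups with $X_j=\widehat{X}/\Gamma_j$, $\widetilde{X}=\widehat{X}/\Gamma_\infty$, $\mathbf{G}_j=\Gamma/\Gamma_j$, and deck group $G=\Gamma/\Gamma_\infty$. The identity $h^{p,q}_{(2)}(X_j)=h^{p,q}(X_j)/|\mathbf{G}_j|$ is immediate, since for the finite covering $X_j\to X$ the von Neumann dimension with respect to $\mathbf{G}_j$ is ordinary dimension divided by $|\mathbf{G}_j|$; the content is the passage to $h^{p,q}_{(2)}(\widetilde{X})$. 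Let $\Box_{p,q}$ be the $\overline{\partial}$-Laplacian on $(p,q)$-forms, with heat semigroup $e^{-t\Box_{p,q}}$, which exists on each $X_j$ and on $\widetilde{X}$ because all carry the metric pulled back from the compact base $X$ and hence have bounded geometry. Writing $\theta^{p,q}_j(t)=|\mathbf{G}_j|^{-1}\mathrm{Tr}\,e^{-t\Box^{(j)}_{p,q}}$ and $\theta^{p,q}_\infty(t)=\mathrm{tr}_G\,e^{-t\Box^{(\infty)}_{p,q}}$ for the (von Neumann) heat traces, one has $h^{p,q}_{(2)}(X_j)=\lim_{t\to\infty}\theta^{p,q}_j(t)$ and $h^{p,q}_{(2)}(\widetilde{X})=\lim_{t\to\infty}\theta^{p,q}_\infty(t)$, since in both cases the positive part of the spectrum contributes $o(1)$ as $t\to\infty$ and only the projection onto $\ker\Box_{p,q}$ survives.

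First I would prove, for each fixed $t>0$, the heat-trace convergence
\[
   \theta^{p,q}_j(t)\;\longrightarrow\;\theta^{p,q}_\infty(t)\qquad(j\to\infty).
\]
Both traces are integrals over a fixed fundamental domain $D\subset\widehat{X}$ of the periodized heat kernel $\widehat{K}_t$ of $\widehat{X}$: because $\Gamma_j$ and $\Gamma_\infty$ are normal, the pointwise traces $\sum_{\gamma\in\Gamma_j}\mathrm{tr}\,\widehat{K}_t(\widehat{x},\gamma\widehat{x})$ and $\sum_{\gamma\in\Gamma_\infty}\mathrm{tr}\,\widehat{K}_t(\widehat{x},\gamma\widehat{x})$ are $\Gamma$-invariant, so that $\theta^{p,q}_j(t)=\int_D\sum_{\gamma\in\Gamma_j}\mathrm{tr}\,\widehat{K}_t(\widehat{x},\gamma\widehat{x})\,dV$ and likewise for $\theta^{p,q}_\infty$. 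A Gaussian off-diagonal bound $|\widehat{K}_t(\widehat{x},\gamma\widehat{x})|\le C(t)\,e^{-\dist(\widehat{x},\gamma\widehat{x})^2/5t}$, valid by bounded geometry, shows that only elements moving $\widehat{x}$ a bounded distance contribute nonnegligibly; since any fixed such $\gamma\notin\Gamma_\infty$ eventually leaves $\Gamma_j$ (as $\bigcap_j\Gamma_j=\Gamma_\infty$), a dominated-convergence argument — the \emph{principle of not feeling the boundary} — gives the claim. As $t\mapsto\theta^{p,q}_j(t)$ is nonincreasing, the inequality $h^{p,q}_{(2)}(X_j)\le\theta^{p,q}_j(t)$ together with this convergence and $t\to\infty$ yields the one-sided estimate
\[
   \limsup_{j\to\infty}h^{p,q}_{(2)}(X_j)\;\le\;h^{p,q}_{(2)}(\widetilde{X})\qquad\text{for every }(p,q).
\]

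Next I would invoke the $L^2$-Betti numbers. By the $L^2$-Hodge decomposition — classical for the compact Kähler $X_j$ and valid on the complete Kähler $\widetilde{X}$ via the Kähler identity $\Delta_d=2\Box_{\overline{\partial}}$, which makes the von Neumann dimensions additive — one has $b^{(2)}_k=\sum_{p+q=k}h^{p,q}_{(2)}$ at every level on each space. The convergence $b^{(2)}_k(X_j)\to b^{(2)}_k(\widetilde{X})$ is the higher-degree analogue of the $b_1^{(2)}$-convergence in \cite{BaikShokriehWu20}, which holds over such towers by the same L\"uck-approximation argument; thus for each $k$
\[
   \sum_{p+q=k}h^{p,q}_{(2)}(X_j)\;\longrightarrow\;\sum_{p+q=k}h^{p,q}_{(2)}(\widetilde{X}).
\]

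Finally, the two ingredients can be squeezed together. Fix $k$ and pass to any subsequence along which all of the finitely many quantities $h^{p,q}_{(2)}(X_j)$ with $p+q=k$ converge, to limits $\ell_{p,q}$. Then $\ell_{p,q}\le h^{p,q}_{(2)}(\widetilde{X})$ by the $\limsup$-estimate, while $\sum_{p+q=k}\ell_{p,q}=\sum_{p+q=k}h^{p,q}_{(2)}(\widetilde{X})$ by Betti-number convergence; hence $\ell_{p,q}=h^{p,q}_{(2)}(\widetilde{X})$ for every $(p,q)$. Since every subsequence has a further subsequence with this same limit, the full sequence converges, proving the theorem. I expect the main obstacle to be the heat-trace convergence: one must set up the $L^2$-Hodge and von Neumann trace formalism on the complete manifolds carefully and make the off-diagonal estimates for the bundle-valued operator $\Box_{p,q}$ on $\widehat{X}$ uniform enough to justify the dominated-convergence passage. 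The remaining steps are then formal, the decisive point being that the individual $\liminf$-estimate — which for a single Dolbeault Laplacian would require an unavailable uniform determinant-class bound — is bypassed via additivity over $p+q=k$ together with the known convergence of the total $L^2$-Betti numbers.
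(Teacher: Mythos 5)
Your proposal is correct, and at the structural level it mirrors the paper's proof: both arguments establish the one-sided bound $\limsup_{j} h^{p,q}_{(2)}(X_j)\le h^{p,q}_{(2)}(\widetilde{X})$ for each fixed $(p,q)$, both obtain convergence of the sums $\sum_{p+q=k}h^{p,q}_{(2)}(X_j)\to\sum_{p+q=k}h^{p,q}_{(2)}(\widetilde{X})$ from the L\"uck/Baik--Shokrieh--Wu approximation theorem for $L^2$-Betti numbers combined with the Hodge decomposition of $L^2$-harmonic $k$-forms, and both then observe that the missing $\liminf$ estimate (the hard direction of any L\"uck-type argument) is forced by additivity of von Neumann dimension. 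The genuine difference is how the one-sided bound is proved. You prove it analytically, via heat kernels: periodization of the heat kernel of the universal cover over the normal subgroups $\Gamma_j\supset\Gamma_\infty$ (normality being exactly what makes the pointwise traces $\Gamma$-invariant), Gaussian off-diagonal bounds from bounded geometry, a dominated-convergence argument giving the heat-trace convergence $\theta^{p,q}_j(t)\to\theta^{p,q}_\infty(t)$ at each fixed $t$, and monotonicity of the trace in $t$. This is the classical Donnelly/Cheeger--Gromov/L\"uck route and it does work, but it is precisely the machinery this paper set out to avoid: the paper's Proposition \ref{prop: Kazhdan inequality} derives the same inequality from the soft Hilbert-space facts of Proposition \ref{prop: properties of kernels} --- monotonicity of the trace of the harmonic projection under enlarging the domain, plus a Ramadanov-type exhaustion statement --- applied to fundamental domains $\widetilde{F}_j$ of $\widetilde{X}\to X_j$, which exhaust $\widetilde{X}$. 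The paper's route buys elementarity (no spectral estimates, no bounded-geometry input beyond compactness of $X$, no principle of not feeling the boundary); yours buys quantitative information (trace convergence for every $t>0$) at the cost of the analytic setup that you yourself flag as the main obstacle. Two points you should make explicit if you write this up: the passage from the cellular $L^2$-Betti numbers furnished by the approximation theorem to the analytic ones $\dim_{\mathbf{G}}\mathcal{H}^k_{(2)}$ requires Dodziuk's $L^2$-Hodge--de Rham theorem (Theorem \ref{thm: L2 hodge-de rham} in the paper), not only the K\"ahler identity; and that identity, $\Delta_d=2\Delta_{\overline{\partial}}$ on the noncompact $\widetilde{X}$, needs completeness of the pulled-back metric (Gaffney), which does hold here since $X$ is compact.
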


On the other hand, the Bergman kernel and metric are strongly related to the projectivity of a manifold in the following sense:
For a complex manifold $X$, one can consider a holomorphic map $\iota_X$ from $X$ into the (possibly infinite dimensional) complex projective space $\PP(H^0_{(2)}(X, K_{X})^*)$ using an orthonormal basis of the Hilbert space $H^0_{(2)}(X, K_{X})$.
The map $\iota_X$ is called the {\it Kobayashi map} (\cite{Kobayashi59}), which can be regarded as a $L^2$-version of the Kodaira map.
When $\iota_X$ is an embedding, we say that $K_X$ is {\it very ample}.
Then the Bergman kernel of $X$ is the pull-back of the Fubini-Study metric, which is a hermitian metric on $\mathcal{O}(1)$-bundle over $\PP(H^0_{(2)}(X, K_{X})^*)$, and the Bergman metric of $X$ is its curvature form.
Now, we consider the following natural question.

\begin{Question}
Assume that the canonical line bundle $K_{\widetilde{X}}$ of top manifold $\widetilde{X}$ is very ample. Is it true that the canonical line bundle $K_{X_j}$ is also very ample for sufficiently large $j$?
\end{Question}

As an application of Theorem \ref{thm A}, we give a partial answer to the above question as follows.

\begin{maincor} \label{thm C}
    Let $\{X_j\rightarrow X\}$ be a tower of Galois coverings of a compact K\"{a}hler manifold $X$ converging to an infinite Galois covering $\widetilde{X}\rightarrow X$. 
    Assume that $\iota_{\widetilde{X}} : \widetilde{X} \rightarrow \PP(H^0_{(2)}(\widetilde{X}, K_{\widetilde{X}})^*)$ is a holomorphic immersion.
    Then there exists a positive integer $N \in \NN$ such that for all $j \ge N$, $\iota_{X_j} : X_j \rightarrow \PP(H^0(X_j, K_{X_j})^*)$ is holomorphic immersion.
\end{maincor}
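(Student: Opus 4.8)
The plan is to recast the immersion property as positive-definiteness of the Bergman metric, reduce it to a uniform estimate over the compact base $X$ by equivariance, and then import the required convergence from Theorem \ref{thm A}. First I would record the differential-geometric reformulation: for a complex manifold $M$ on which the Kobayashi map is defined, the Bergman metric is the pull-back $\iota_M^*\w_{FS}$ of the Fubini--Study form, and $\iota_M$ is a holomorphic immersion at a point $p$ if and only if this pull-back is a positive-definite $(1,1)$-form at $p$. In a local holomorphic frame of $K_M$ the Bergman metric equals $\IM\,\partial\bar\partial\log\Bergman_M(z,\bar z)$, whose coefficients are the universal rational expressions in the values of the kernel $\Bergman_M(z,\bar w)$ and of its first and second derivatives along the diagonal $w=z$, with $\Bergman_M$ in the denominator. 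Thus the statement becomes: the Bergman metrics of $X_j$ are positive-definite for all large $j$. Note that $\iota_{X_j}$ is \emph{a priori} only defined where $\Bergman_{X_j}$ does not vanish; since the limiting kernel $\Bergman_{\widetilde X}$ is everywhere positive (as $\iota_{\widetilde X}$ is defined) and, as explained below, the convergence is uniform over the compact base, $\Bergman_{X_j}>0$ for large $j$, so $\iota_{X_j}$ is a genuine holomorphic map there.

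Next I would reduce everything to the compact base $X$ using equivariance. Each kernel $\Bergman_{X_j}$ is invariant under the deck group $\mathbf{G}_j$, so its pull-back to $\widetilde X$ is invariant under the full deck group $\mathbf{G}$ of $\widetilde X\rightarrow X$; the same holds for $\Bergman_{\widetilde X}$. Hence both descend to the compact quotient $X$. Fixing the background Kähler form $\w_0$ on $X$ and pulling it back, the hypothesis that $\iota_{\widetilde X}$ is an immersion means its Bergman metric is positive-definite on all of $\widetilde X$, so by $\mathbf{G}$-invariance and compactness of $X$ there is $\varepsilon>0$ with $\iota_{\widetilde X}^*\w_{FS}\ge \varepsilon\,\w_0$ uniformly. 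Likewise, the immersion property of $\iota_{X_j}$ on all of $X_j$ is equivalent to positive-definiteness of its Bergman metric at every point of a fundamental domain, i.e.\ over $X$.

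The heart of the matter is to upgrade the locally uniform convergence of Theorem \ref{thm A}, a $C^0$-statement about the kernels, to convergence of the Bergman \emph{metrics}, a $C^2$-statement; this is where the holomorphic structure is essential. The off-diagonal kernel $\Bergman_M(z,\bar w)$ is holomorphic in $z$ and anti-holomorphic in $w$, so locally uniform convergence of the kernels forces, by the Cauchy integral formula in each variable, locally uniform convergence of every derivative $\partial_z^\alpha\bar\partial_w^\beta\Bergman_{X_j}\to\partial_z^\alpha\bar\partial_w^\beta\Bergman_{\widetilde X}$. If Theorem \ref{thm A} is invoked only on the diagonal, one first recovers off-diagonal convergence from the Cauchy--Schwarz bound $|\Bergman_M(z,\bar w)|^2\le \Bergman_M(z,\bar z)\,\Bergman_M(w,\bar w)$ via a normal-family argument, together with the fact that the diagonal values determine the kernel. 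Restricting to $w=z$ and feeding these into the rational expression for $\IM\,\partial\bar\partial\log\Bergman_M$—legitimate since $\Bergman_{\widetilde X}>0$—yields locally uniform convergence $\iota_{X_j}^*\w_{FS}\to \iota_{\widetilde X}^*\w_{FS}$ on $\widetilde X$. Since all these forms are $\mathbf{G}$-invariant and a closed fundamental domain is compact, this is the same as uniform convergence over $X$. I expect this $C^0\to C^2$ passage, and in particular securing the off-diagonal control, to be the main obstacle.

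Finally the estimates combine: by uniform convergence over the compact $X$ there is $N\in\NN$ so that for all $j\ge N$ one has $\bigl|\iota_{X_j}^*\w_{FS}-\iota_{\widetilde X}^*\w_{FS}\bigr|<\tfrac{\varepsilon}{2}\,\w_0$ over $X$, whence $\iota_{X_j}^*\w_{FS}\ge\tfrac{\varepsilon}{2}\,\w_0>0$ at every point of $X$, and therefore at every point of $X_j$ by $\mathbf{G}_j$-invariance. Hence $\iota_{X_j}$ is a holomorphic immersion for all $j\ge N$, which is the assertion.
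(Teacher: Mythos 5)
Your proposal is correct and takes essentially the same approach as the paper: immersion is characterized through positivity of the Bergman kernel and positive-definiteness of the Bergman metric (Lemmas \ref{lem: base-point free} and \ref{lem: immersion}, due to Kobayashi), and both positivity statements are transferred to $X_j$ for large $j$ by combining Bergman stability with deck-group invariance and compactness of the base $X$. The only difference is that you re-derive the passage from $C^0$ kernel convergence to $C^2$ (metric) convergence via Cauchy estimates, whereas the paper already packages this into Theorem \ref{thm: Bergman stable} (through Proposition \ref{prop: uniform convergence of Bergman kernels}, whose proof uses exactly the Cauchy-estimate mechanism you describe).
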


In fact, S.-K. Yeung obtained some affirmative results assuming that the top manifold $\widetilde{X}$ is a Hermitian symmetric manifold of non-compact type, which is simply-connected; when the base manifold $X$ is compact he \cite{Yeung00} proved the very ampleness of $K_{X_j}$ (see the last paragraph in Section \ref{sec 6}), and when $X$ is non-compact he \cite{Yeung12} proved the existence of a holomorphic immersion $X_j \rightarrow \PP(H^0(X_j, K_{X_j})^*)$.
\vskip 0.5em
This paper is organized as follows. In Section \ref{sec 2}, we briefly review relevant materials from the theory of von Neumann algebras and results on the approximation theorem of $L^2$-Betti numbers. 
In Section \ref{sec 3}, we discuss their applications to a tower of coverings of compact Riemannian manifolds in terms of the Schwartz kernels.
In Section \ref{sec 4}, we prove Theorem \ref{thm B} using the properties of the Schwartz kernels.
In Section \ref{sec 5}, we review the definitions of Bergman kernel and Bergman metric and establish some of their basic properties, and prove Theorem \ref{thm A}.
Corollary \ref{thm C} is proved in Section \ref{sec 6}.

\vskip 0.5em

\noindent
\textbf{Acknowledgements}. 
The authors would like to thank Professor Jun-Muk Hwang for his suggestion of this work and valuable comments.
This work was supported by the Institute for Basic Science (IBS-R032-D1).


\section{Preliminaries} \label{sec 2}
In this section, we briefly review some terminologies, notations, and basics on the theory of von Neumann algebras.
For more details, we refer to \cite{Luck02,kammeyer2019introduction}.
We also discuss the approximation theorem of $L^2$-Betti numbers by L\"{u}ck \cite{Luck94} and Baik-Shokrieh-Wu \cite{BaikShokriehWu20}.

\subsection{Tower of coverings}

Let $X$ be a topological space (or differentiable or complex manifold).
Recall that $\phi:Y\rightarrow X$ is called {\it Galois} (or normal) if the group of deck transformations 
$\mathbf{G} := \{ f \in \Aut(Y) : \phi \circ f = \phi \} $ acts on $\phi^{-1}(p)$ transitively for all $p \in X$, and we may describe a Galois covering as a quotient map $Y\rightarrow Y/\mathbf{G}$.

\begin{defn}
    A sequence of topological spaces (or differentiable or complex manifolds) $\{X_j\}_{j=1}$ is called a {\it tower of coverings} of $X$ if there exists a sequence of continuous (or smooth or holomorphic, resp.) Galois covering maps 
    $\{\phi_j : X_j \rightarrow X\}_{j=1}$ satisfying 
    $$\pi_1(X) \supset \pi_1(X_1) \supset \pi_1(X_2) \supset \cdots$$
    such that $\pi_1(X_j)$ is a normal subgroup of $\pi_1(X)$ of finite index $[ \pi_1(X) : \pi_1(X_j) ]$ for each $j$, where $\pi_1(X)$ denotes the first fundamental group of $X$.
\end{defn}

\begin{defn}
We say that a tower of (Galois) coverings $\{\phi_j : X_j \rightarrow X\}$ {\it converges} to an infinite Galois covering $\phi : \widetilde{X} \rightarrow X$ if it satisfies
$$
\bigcap_{j=1} \pi_1(X_j) = \pi_1(\widetilde{X}).
$$
We will refer to $\widetilde{X}$ as the {\it top manifold} and $X$ as the {\it base manifold}.
\end{defn}

\begin{rmk}
    There exists an another sequence of Galois coverings $\{\widetilde{\phi}_j : \widetilde{X} \rightarrow X_j\}$ such that $\phi = \phi_j \circ \widetilde{\phi}_j$ because $\pi_1(\widetilde{X})$ is a normal subgroup of $\pi_1(X_j)$.
\end{rmk}

\subsection{Hilbert $\mathbf{G}$-modules and von Neumann dimension}

Let $\mathbf{G}$ be a discrete group.
Define the Hilbert space $\ell^2(\mathbf{G})$ by 
$$
\ell^2(\mathbf{G}):=\Big\{ u:\mathbf{G}\rightarrow\mathbb{C}\ \Big| \sum_{g\in\mathbf{G}}|u(g)|^2<\infty     \Big\}.
$$
Then the set of indicator functions $\{\delta_g\}_{g\in\mathbf{G}}$ is an orthonormal basis of $\ell^2(\mathbf{G})$.

\begin{defn}
Let $V$ be a Hilbert space and let $\mathbf{G}$ be a discrete group.
\begin{itemize}
    \item $V$ is called a {\it Hilbert $\mathbf{G}$-module} if it admits a left unitary action of $\mathbf{G}$.
    \item $V$ is called a {\it free Hilbert $\mathbf{G}$-module} if it is a Hilbert $\mathbf{G}$-module and $\mathbf{G}$-equivariant unitary isomorphic to $\ell^2(\mathbf{G})\otimes H$ for some Hilbert space $H$.
    \item $V$ is called a {\it projective Hilbert $\mathbf{G}$-module} if it is a Hilbert $\mathbf{G}$-module and admits a unitary isometric $\mathbf{G}$-equivariant embedding into a free Hilbert $\mathbf{G}$-module.
\end{itemize}
\end{defn}

Let $V$ be a projective Hilbert $\mathbf{G}$-module (embedded into some free Hilbert $\mathbf{G}$-module $\ell^2(\mathbf{G})\otimes H$),
and $\pi_V$ be the orthogonal projection from $\ell^2(\mathbf{G})\otimes H$ onto $V$.

\begin{defn}
The {\it von Neumann dimension} (or {\it $\mathbf{G}$-dimension}) of $V$ is defined by
$$
{\rm dim}_{\mathbf{G}}(V):={\rm Tr}_{\mathbf{G}}(\pi_V)=\sum_{\alpha\in J}\langle \pi_V(\delta_e\otimes u_\alpha),\delta_e\otimes u_\alpha\rangle,
$$
where $\{u_{\alpha}\}_{\alpha\in J}$ is an orthonormal basis of $H$ and $e$ is the identity element of $\mathbf{G}$.
\end{defn}

\begin{rmk}\label{rmk: finite von Neumann}
This is a well-defined invariant (independent of the choice of the $\mathbf{G}$-equivariant embedding).
If ${\rm dim}_{\mathbb{C}}(V)$ and $|\mathbf{G}|$ are both finite, ${\rm dim}_{\mathbf{G}}(V)=\frac{1}{|\mathbf{G}|}{\rm dim}_{\mathbb{C}}(V).$
\end{rmk}

\subsection{$L^2$-Betti numbers and the approximation theorem}
Let $X$ be a free $\mathbf{G}$-CW-complex of finite type for some discrete group $\mathbf{G}$.
\begin{defn}
Let $(C_{\ast}(X),d_{\ast})$ be the cellular chain complex of $X$.
Define the (cellular) {\it $L^2$-chain complex} and {\it $L^2$-cochain complex} of $X$ by
$$
(C_{\ast}^{(2)}(X),d_{\ast}^{(2)}):=(\ell^2(\mathbf{G})\otimes_{\mathbb{Z}\mathbf{G}}C_{\ast}(X),{\rm id}\otimes d_{\ast}),
$$
$$
(C^{\ast}_{(2)}(X),d^{\ast}_{(2)}):=({\rm Hom}_{\mathbb{Z}\mathbf{G}}(C_{\ast}(X),\ell^2(\mathbf{G})),d^{\ast}_{(2)}).
$$
\end{defn}
The above complexes are sequences of (finitely generated) projective Hilbert $\mathbf{G}$-modules.
Therefore the followings inherit the structure of projective Hilbert $\mathbf{G}$-modules.
\begin{defn}
The $k$-th reduced {\it $L^2$-homology} and {\it $L^2$-cohomology} of the pair $(X,\mathbf{G})$ are projective Hilbert $\mathbf{G}$-modules, defined by
$$
H_k^{(2)}(X;\mathbf{G}):={\rm Ker}\ d_k^{(2)}/\overline{{\rm Im}\ d_{k+1}^{(2)}},\ \ \ 
H^k_{(2)}(X;\mathbf{G}):={\rm Ker}\ d^k_{(2)}/\overline{{\rm Im}\ d^{k-1}_{(2)}}.
$$
The $k$-th homological and cohomological {\it $L^2$-Betti number} of $(X,\mathbf{G})$ are defined by
$$
b_k^{(2)}(X;\mathbf{G}):={\rm dim}_{\mathbf{G}}H_k^{(2)}(X;\mathbf{G}),\ \ \ 
b^k_{(2)}(X;\mathbf{G}):={\rm dim}_{\mathbf{G}}H^k_{(2)}(X;\mathbf{G}).
$$
\end{defn}

\begin{rmk}
It is well-known that $b^k_{(2)}(X;\mathbf{G})=b_k^{(2)}(X;\mathbf{G})$ (cf. Theorem 3.24 in \cite{kammeyer2019introduction}).
    By Remark \ref{rmk: finite von Neumann}, if $|\mathbf{G}|$ is finite, we have
    $$
    b^k_{(2)}(X;\mathbf{G})=b_k^{(2)}(X;\mathbf{G})=\frac{1}{|\mathbf{G}|}b_k(X),
    $$
    where $b_k(X)$ is the (ordinary) $k$-th Betti number of $X$.
\end{rmk}

Note that a Galois covering $Y$ of a finite CW complex $X$ is a free $\mathbf{G}$-CW-complex of finite type with the group of deck transformations $\mathbf{G}:=\pi_1(X)/\pi_1(Y)$.
The following approximation theorem was first proved by L\"{u}ck when $\widetilde{X}$ is the universal covering of $X$ (see Theorem 0.1 in \cite{Luck94}).
Baik, Shokrieh and Wu generalized this to any infinite Galois covering in the following form.

\begin{thm}[cf. Theorem 4.6 in \cite{BaikShokriehWu20}] \label{thm: top Kazhdan eq}
Let $\{X_j\}$ be a tower of coverings of a finite connected CW complex $X$ converging to an infinite Galois covering $ \widetilde{X} \rightarrow X$. Then
$$
b^k_{(2)}(\widetilde{X};\mathbf{G})=\lim\limits_{j\rightarrow\infty}b^k_{(2)}(X_j;\mathbf{G}_j)=\lim\limits_{j\rightarrow\infty}\frac{b_k(X_j)}{|\mathbf{G}_j|},
$$
where $\mathbf{G}:=\pi_1(X)/\pi_1(\widetilde{X})$, $\mathbf{G}_j:=\pi_1(X)/\pi_1(X_j)$, and $|\mathbf{G}_j|:=[\pi_1(X):\pi_1(X_j)]$.
\end{thm}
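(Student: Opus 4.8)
The plan is to follow L\"uck's spectral-measure method, adapted so that the role of the ambient group is played by $\mathbf{G}=\pi_1(X)/\pi_1(\widetilde{X})$ rather than by $\pi_1(X)$ itself, as in \cite{BaikShokriehWu20}. The starting observation is combinatorial Hodge theory: the reduced $L^2$-cohomology $H^k_{(2)}(\,\cdot\,;\mathbf{G})$ is unitarily $\mathbf{G}$-isomorphic to the kernel of the combinatorial Laplacian $\Delta_k=d^{k-1}_{(2)}(d^{k-1}_{(2)})^{\ast}+(d^{k}_{(2)})^{\ast}d^{k}_{(2)}$, a bounded nonnegative self-adjoint operator. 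Hence $b^k_{(2)}(\widetilde{X};\mathbf{G})=\mu_{\Delta_k}(\{0\})$, where $\mu_{\Delta_k}$ is the spectral measure of $\Delta_k$ computed with ${\rm Tr}_{\mathbf{G}}$; for the finite covers $X_j$ the same identity together with ordinary Hodge theory and Remark \ref{rmk: finite von Neumann} gives $\mu_{\Delta^{(j)}_k}(\{0\})=b^k_{(2)}(X_j;\mathbf{G}_j)=b_k(X_j)/|\mathbf{G}_j|$. So the whole statement reduces to proving $\mu_{\Delta^{(j)}_k}(\{0\})\to\mu_{\Delta_k}(\{0\})$.

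First I would package the algebra. Writing $\mathbf{G}^{(j)}:=\pi_1(X_j)/\pi_1(\widetilde{X})$, the hypotheses say that $\{\mathbf{G}^{(j)}\}$ is a decreasing sequence of finite-index normal subgroups of $\mathbf{G}$ with $\bigcap_j\mathbf{G}^{(j)}=\{e\}$ and $\mathbf{G}_j=\mathbf{G}/\mathbf{G}^{(j)}$. After choosing cellular bases, all the Laplacians are images of a single self-adjoint matrix $A\in M_N(\ZZ[\mathbf{G}])$ under the ring maps $\rho_j:\ZZ[\mathbf{G}]\to\ZZ[\mathbf{G}_j]$ induced by the quotients; write $A_j:=\rho_j(A)$. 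I would then prove weak convergence $\mu_{A_j}\to\mu_A$ by matching moments: since ${\rm Tr}_{\mathbf{G}}$ reads off the coefficient of $e$, the trace ${\rm Tr}_{\mathbf{G}_j}(A_j^p)$ collects exactly the $\mathbf{G}$-coefficients of $A^p$ supported on $\mathbf{G}^{(j)}$; as the entries of $A^p$ have finite support and $\bigcap_j\mathbf{G}^{(j)}=\{e\}$, for each fixed $p$ these two traces agree once $j$ is large, so every moment converges and weak convergence follows.

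The delicate point, and the main obstacle, is the behaviour at $0$. Weak convergence combined with monotonicity gives only the easy upper bound $\limsup_j\mu_{A_j}(\{0\})\le\mu_{A_j}([0,\delta])\to F(\delta)$ at continuity points $\delta$ of $F(\lambda):=\mu_A([0,\lambda])$; it is blind to small positive eigenvalues of $A_j$ piling up near $0$, and so cannot by itself yield the matching lower bound. This is resolved by L\"uck's integrality estimate. In the regular representation each $A_j$ becomes a nonnegative-definite \emph{integer} matrix, so the product of its nonzero eigenvalues is a positive integer, hence $\ge 1$; equivalently $\int_{0^+}\log\lambda\,d\mu_{A_j}(\lambda)\ge 0$. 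Splitting this integral at $\lambda=1$ and bounding the part above $1$ by $\log\Lambda$, where $\Lambda$ is a uniform upper bound for the $\|A_j\|$ (finite, since the entries of $A$ have $\ell^1$-norm independent of $j$), yields $\int_{0^+}^{1}|\log\lambda|\,d\mu_{A_j}\le\log\Lambda$ uniformly in $j$, whence $\mu_{A_j}\big((0,\delta]\big)\le\log\Lambda/|\log\delta|$ uniformly.

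Finally I would assemble the two estimates. For a continuity point $\delta>0$ of $F$ we have $\mu_{A_j}(\{0\})=\mu_{A_j}([0,\delta])-\mu_{A_j}\big((0,\delta]\big)$; letting $j\to\infty$ (using $\mu_{A_j}([0,\delta])\to F(\delta)$) and then $\delta\to0^+$ (using the uniform bound and right-continuity of $F$) sandwiches $\lim_j\mu_{A_j}(\{0\})=F(0)=\mu_A(\{0\})$. Translating back through the first step gives $b^k_{(2)}(X_j;\mathbf{G}_j)\to b^k_{(2)}(\widetilde{X};\mathbf{G})$, which is the claim; the generalization beyond L\"uck's original universal-cover setting is purely a matter of having run the argument with $\mathbf{G}$ and its residual chain $\{\mathbf{G}^{(j)}\}$ in place of $\pi_1(X)$, the integrality input being unaffected.
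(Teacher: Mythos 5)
Your proposal is correct, but note that the paper does not prove this statement itself: it quotes Theorem 4.6 of \cite{BaikShokriehWu20}, whose proof is precisely the argument you reconstruct, namely L\"uck's spectral-measure approximation (combinatorial Hodge theory, moment matching via $\bigcap_j\mathbf{G}^{(j)}=\{e\}$, and the integrality/determinant bound killing spectral mass near $0$) run relative to $\mathbf{G}=\pi_1(X)/\pi_1(\widetilde{X})$ and the chain $\mathbf{G}^{(j)}=\pi_1(X_j)/\pi_1(\widetilde{X})$ instead of $\pi_1(X)$. The only blemishes are cosmetic: the uniform bound on $\int_{(1,\Lambda]}\log\lambda\,d\mu_{A_j}$ should be $N\log\Lambda$ (total mass $N$, not $1$), and in the displayed upper-bound inequality the limit over $j$ of $\mu_{A_j}([0,\delta])$ is missing, neither of which affects the argument.
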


\section{Tower of coverings of compact Riemannian manifolds} \label{sec 3}
In this section, we will explain the convergence of the trace forms of Schwartz kernels on a tower of coverings of compact Riemannian manifolds via the convergence of $L^2$-Betti numbers (without using the heat kernel method).
\subsection{Kazhdan's equality for harmonic spaces}

Let $(X,g)$ be a (possibly non-compact) Riemannian manifold.
Let $A^k(X)$ be the space of complex differential $k$-forms on $X$, i.e.,
the set of smooth sections of the bundle $\bigwedge^k T^{\ast}_{\mathbb{C}}X$, where $T^{\ast}_{\mathbb{C}}X:=T^{\ast}_{\mathbb{R}}X\otimes\mathbb{C}$ is the complexified cotangent bundle.
For any compactly supported forms $u,v\in A^k_c(X)$, define the inner product by
$$
\langle u,v\rangle:=\int_X u\wedge\ast\overline{ v}=\int_X(u,v)\ dV_g,
$$
where $\ast$ is the Hodge star operator and $dV_g$ is the volume form of $g$.
Denote its Hilbert space completion by $A^k_{(2)}(X)$.
Then the space of $L^2$ harmonic (complex differential) $k$-forms is defined by
$$
\mathcal{H}^k_{(2)}(X):=\{u\in A^k_{(2)}(X)\ |\ \Delta_{g} u=0\}
$$
(we also use the notation $\mathcal{H}^k(X,\mathbb{C})$ without the subscript when $X$ is compact).

Let $Y\rightarrow X$ be a Galois covering of a compact Riemannian manifold $(X,g)$, with the group of deck transformations $\mathbf{G}=\pi_1(X)/\pi_1(Y)$.
Since $X$ admits a finite CW complex structure, we can define the reduced $L^2$-cohomology $H^k_{(2)}(Y;\mathbf{G})$ of $(Y,\mathbf{G})$.
On the other hand, $Y$ admits a $\mathbf{G}$-invariant complete Riemannian metric $\widetilde{g}$, induced by the metric $g$ of $X$.
In \cite{Dodziuk77}, Dodziuk showed that $\mathcal{H}^k_{(2)}(Y)$ also has a structure of projective Hilbert $\mathbf{G}$-module and
proved the following $L^2$-version of the Hodge-de Rham theorem.

\begin{thm}[cf. Theorem 1 in \cite{Dodziuk77}] \label{thm: L2 hodge-de rham}
We have the following canonical isomorphisms between projective Hilbert $\mathbf{G}$-modules:
$$
\mathcal{H}^k_{(2)}(Y)\simeq H^k_{\rm dR (2)}(Y,\mathbb{C})\simeq H^k_{(2)}(Y;\mathbf{G}),
$$
where $H^k_{\rm dR (2)}(Y,\mathbb{C})$ is the reduced $L^2$-de Rham cohomology.
In particular, we have 
$$
{\rm dim}_{\mathbf{G}}\mathcal{H}^k_{(2)}(Y)={\rm dim}_{\mathbf{G}}H^k_{(2)}(Y;\mathbf{G}).
$$
\end{thm}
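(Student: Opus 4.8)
The plan is to prove the two isomorphisms separately: the first (Hodge-theoretic) one analytically, using completeness of the covering metric, and the second (de Rham) one combinatorially, by comparing the smooth $L^2$-complex with the cellular $L^2$-cochain complex. In both cases the essential extra point, beyond the classical statements, is that the maps produced are bounded and $\mathbf{G}$-equivariant, so that they descend to isomorphisms of projective Hilbert $\mathbf{G}$-modules and in particular preserve von Neumann dimension.

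For the first isomorphism $\mathcal{H}^k_{(2)}(Y)\simeq H^k_{\rm dR (2)}(Y,\mathbb{C})$, the key input is that the induced metric $\widetilde{g}$ on $Y$ is complete, since $g$ is a metric on the compact manifold $X$. Completeness guarantees that $\Delta_{\widetilde g}$ is essentially self-adjoint on compactly supported smooth forms and that one has the $L^2$ Hodge--Kodaira--de Rham decomposition
$$
A^k_{(2)}(Y) = \mathcal{H}^k_{(2)}(Y)\oplus \overline{\mathrm{Im}\,d}\oplus \overline{\mathrm{Im}\,\delta},
$$
where $d$ denotes the closure of the exterior derivative and $\delta$ its adjoint. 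From this orthogonal decomposition the space of $L^2$-closed forms is $\ker d = \mathcal{H}^k_{(2)}(Y)\oplus\overline{\mathrm{Im}\,d}$, so the orthogonal projection onto harmonic forms induces a unitary isomorphism $\ker d/\overline{\mathrm{Im}\,d}\to\mathcal{H}^k_{(2)}(Y)$, which is exactly $H^k_{\rm dR (2)}(Y,\mathbb{C})\to\mathcal{H}^k_{(2)}(Y)$. Since $\widetilde g$ is $\mathbf{G}$-invariant, $\mathbf{G}$ acts unitarily and commutes with $d$, $\delta$ and $\Delta$, so all summands are $\mathbf{G}$-invariant and the isomorphism is $\mathbf{G}$-equivariant.

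For the second isomorphism I would pass from the analytic to the combinatorial model. Fix a finite smooth triangulation of the compact base $X$ and lift it $\mathbf{G}$-equivariantly to a triangulation of $Y$; this is the cellular structure computing $H^k_{(2)}(Y;\mathbf{G})$. I would then introduce the two comparison maps of classical de Rham theory: the integration (de Rham) map $I$ sending an $L^2$-form to its periods over simplices, and the Whitney map $W$ built from barycentric coordinates, both commuting with the differentials and with the $\mathbf{G}$-action. The crucial analytic point is that, because $X$ is compact, the lifted triangulation has only finitely many $\mathbf{G}$-orbits of simplices, which together with the $\mathbf{G}$-invariance of $\widetilde g$ yields uniform bounds making $I$ and $W$ bounded operators on the $L^2$-completions. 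One then checks that $I\circ W = \mathrm{id}$ and that $W\circ I$ is chain homotopic to the identity through a bounded $\mathbf{G}$-equivariant homotopy. Since a bounded homotopy equivalence of Hilbert $\mathbf{G}$-complexes induces an isomorphism of reduced cohomologies as Hilbert $\mathbf{G}$-modules, this gives $H^k_{\rm dR (2)}(Y,\mathbb{C})\simeq H^k_{(2)}(Y;\mathbf{G})$.

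The equality of von Neumann dimensions is then immediate, since $\dim_{\mathbf{G}}$ is invariant under $\mathbf{G}$-equivariant (indeed weak) isomorphism of projective Hilbert $\mathbf{G}$-modules. The main obstacle is the analytic content of the second step: establishing the $L^2$-boundedness and $\mathbf{G}$-equivariance of $W$, of $I$, and especially of the chain homotopy between $W\circ I$ and the identity. These all rest on the uniform geometry of $Y$ coming from the cocompact $\mathbf{G}$-action; without compactness of the base $X$ the Whitney forms and their derivatives could fail to be uniformly $L^2$-bounded and the comparison would break down.
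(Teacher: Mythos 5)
Your overall strategy is exactly that of the source the paper relies on: the paper itself gives no proof of this theorem, but simply quotes it from Dodziuk \cite{Dodziuk77}, and Dodziuk's argument is precisely your two-step plan (Hodge--Kodaira decomposition on the complete covering for the first isomorphism, Whitney/de Rham comparison with the lifted triangulation for the second). Your first step is correct as written: completeness of $\widetilde{g}$ gives Gaffney's theorem, hence $\mathcal{H}^k_{(2)}(Y)=\ker d\cap\ker\delta$ and the orthogonal decomposition $A^k_{(2)}(Y)=\mathcal{H}^k_{(2)}(Y)\oplus\overline{\mathrm{Im}\,d}\oplus\overline{\mathrm{Im}\,\delta}$, from which the $\mathbf{G}$-equivariant unitary isomorphism $H^k_{\rm dR(2)}(Y,\mathbb{C})\simeq\mathcal{H}^k_{(2)}(Y)$ follows.

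There is, however, a genuine gap in the second step: the integration map $I$ is \emph{not} a bounded operator on the $L^2$-completions --- it is not even well-defined there. An element of $A^k_{(2)}(Y)$ (or of the domain of the closed operator $d$, i.e.\ $u\in L^2$ with $du\in L^2$ distributionally) has no trace on a $k$-simplex, which is a subset of positive codimension once $\dim Y\geq 2$; already for $0$-forms on an $n$-manifold, point evaluation is undefined on $H^1$ when $n\geq 2$. Uniform geometry and cocompactness of the $\mathbf{G}$-action cannot repair this: the obstruction is local regularity, not global uniformity. What is actually needed --- and this is the technical heart of Dodziuk's proof --- is one of the following: (i) compose $I$ with a $\mathbf{G}$-equivariant smoothing operator (e.g.\ the heat semigroup $e^{-t\Delta}$), which maps $L^2$ boundedly into a Sobolev space $H^s$ with $s>n/2$ where trace estimates make $I$ bounded, and which is chain homotopic to the identity through a bounded $\mathbf{G}$-equivariant homotopy built from $1-e^{-t\Delta}=d\bigl(\int_0^t\delta e^{-s\Delta}\,ds\bigr)+\bigl(\int_0^t\delta e^{-s\Delta}\,ds\bigr)d$; or (ii) work on a dense subcomplex of sufficiently regular forms and use elliptic regularity of harmonic representatives (with estimates uniform by $\mathbf{G}$-invariance) to show this subcomplex computes the same reduced cohomology. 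Your Whitney map $W$ is indeed bounded for the reasons you give, and $I\circ W=\mathrm{id}$ holds where $I$ is defined; but as stated, the claim ``$I$ and $W$ are bounded operators on the $L^2$-completions'' is false for $I$, and the chain homotopy between $W\circ I$ and the identity inherits the same problem. Once the smoothing (or regular-subcomplex) device is inserted, the rest of your argument, including the invariance of $\dim_{\mathbf{G}}$ under $\mathbf{G}$-equivariant weak isomorphism, goes through.
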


Note that as in Remark \ref{rmk: finite von Neumann}, if $Y$ is compact and $|\mathbf{G}|$ is finite, we have
$$
{\rm dim}_{\mathbf{G}}\mathcal{H}^k_{(2)}(Y)
=\frac{{\rm dim}_{\mathbb{C}}\mathcal{H}^k_{(2)}(Y)}{|\mathbf{G}|}
=\frac{{\rm dim}_{\mathbb{C}}\mathcal{H}^k(Y,\mathbb{C})}{|\mathbf{G}|}.
$$
Then, Theorem \ref{thm: L2 hodge-de rham} and Theorem \ref{thm: top Kazhdan eq} imply Kazhdan's equality for harmonic spaces.

\begin{cor} \label{cor: Riemann hodge Kazhdan eq}
Let $\{X_j\}$ be a tower of coverings of a compact Riemannian manifold $X$ converging to an infinite Galois covering $ \widetilde{X} \rightarrow X$. Then
$$
{\rm dim}_{\mathbf{G}}\mathcal{H}^k_{(2)}(\widetilde{X})
=\lim\limits_{j\rightarrow\infty}{\rm dim}_{\mathbf{G}_j}\mathcal{H}^k_{(2)}(X_j)
=\lim\limits_{j\rightarrow\infty}\frac{{\rm dim}_{\mathbb{C}}\mathcal{H}^k(X_j,\mathbb{C})}{|\mathbf{G}_j|},
$$
where $\mathbf{G}:=\pi_1(X)/\pi_1(\widetilde{X})$, $\mathbf{G}_j:=\pi_1(X)/\pi_1(X_j)$, and $|\mathbf{G}_j|=[\pi_1(X):\pi_1(X_j)]$.
\end{cor}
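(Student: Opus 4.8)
The plan is to combine the two structural results that have just been established: Dodziuk's $L^2$-Hodge--de Rham theorem (Theorem \ref{thm: L2 hodge-de rham}) and the Baik--Shokrieh--Wu approximation theorem (Theorem \ref{thm: top Kazhdan eq}). Neither step requires new analysis; the work is purely in identifying the right invariants and chaining the equalities.

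First I would apply Theorem \ref{thm: L2 hodge-de rham} separately to each finite covering $X_j \to X$, whose deck group is $\mathbf{G}_j$, and to the infinite covering $\widetilde{X}\to X$, whose deck group is $\mathbf{G}$. Since $X$ is compact it carries a finite CW structure, so the hypotheses of that theorem are met for every level of the tower as well as for the top. This yields the identities
$$
{\rm dim}_{\mathbf{G}_j}\mathcal{H}^k_{(2)}(X_j)=b^k_{(2)}(X_j;\mathbf{G}_j), \qquad {\rm dim}_{\mathbf{G}}\mathcal{H}^k_{(2)}(\widetilde{X})=b^k_{(2)}(\widetilde{X};\mathbf{G}),
$$
converting every von Neumann dimension of a harmonic space into the corresponding $L^2$-Betti number.

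Second, I would invoke Theorem \ref{thm: top Kazhdan eq} directly, which gives
$$
b^k_{(2)}(\widetilde{X};\mathbf{G})=\lim_{j\to\infty}b^k_{(2)}(X_j;\mathbf{G}_j)=\lim_{j\to\infty}\frac{b_k(X_j)}{|\mathbf{G}_j|}.
$$
Substituting the identities from the first step turns the left-hand side and the first limit into the statements about harmonic spaces asserted in the corollary. For the final expression, note that each $X_j$ is compact (a finite covering of a compact space), so its $L^2$ harmonic forms coincide with the ordinary harmonic forms and $b_k(X_j)={\rm dim}_{\mathbb{C}}\mathcal{H}^k(X_j,\mathbb{C})$ by the classical Hodge theorem; this is precisely the compact specialization recorded just before the statement via Remark \ref{rmk: finite von Neumann}. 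Replacing $b_k(X_j)$ accordingly gives the rightmost limit.

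The argument is essentially a bookkeeping of isomorphisms, so there is no genuine obstacle; the only point demanding care is checking that the two notions of $L^2$-cohomology---the analytic one built from harmonic forms and the combinatorial one built from the cellular $L^2$-cochain complex---agree as Hilbert $\mathbf{G}$-modules for the \emph{infinite} covering $\widetilde{X}$, not merely at the finite levels. This compatibility at the top is exactly what Dodziuk's theorem supplies, and without it the approximation theorem (stated for the combinatorial $L^2$-Betti numbers) could not be transported to the harmonic spaces.
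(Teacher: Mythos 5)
Your proposal is correct and matches the paper's own argument exactly: the paper derives the corollary by chaining Dodziuk's isomorphism (Theorem \ref{thm: L2 hodge-de rham}) with the Baik--Shokrieh--Wu approximation theorem (Theorem \ref{thm: top Kazhdan eq}), using the compact-case identity ${\rm dim}_{\mathbf{G}_j}\mathcal{H}^k_{(2)}(X_j)={\rm dim}_{\mathbb{C}}\mathcal{H}^k(X_j,\mathbb{C})/|\mathbf{G}_j|$ from Remark \ref{rmk: finite von Neumann} for the rightmost limit, just as you do. Your closing observation---that Dodziuk's theorem is what lets the combinatorial approximation result be transported to harmonic spaces at the infinite level---is precisely the point of the paper's setup.
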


\subsection{Convergence of the trace forms of Schwartz kernels}
Let $X$ be a Riemannian manifold.
Consider the orthogonal projection operator $\pi^k_{X}$ from $A^k_{(2)}(X)$ onto
$\mathcal{H}^k_{(2)}(X)$.
Let $\{e_{\alpha}\}$ be orthonormal bases for $A^k_{(2)}(X)$.
\begin{defn}
The {\it Schwartz kernel} form of the operator $\pi^k_{X}$ is a symmetric smooth double form $\mathcal{K}^k_X\in A^k_{(2)}(X)\otimes A^k_{(2)}(X)$ defined by for $x,y\in X$,
$$
\mathcal{K}^k_X(x,y)
:=\sum_{\alpha,\beta}\inner{\pi^k_{X}(e_{\alpha}),e_{\beta}}e_{\alpha}(x)\otimes \overline{e_{\beta}(y)}.
$$
The {\it trace form} of the Schwartz kernel ${\rm tr}\mathcal{K}^k_X$ is a volume form on $X$ defined by
$$
{\rm tr}\mathcal{K}^k_X(x)
=\sum_{\alpha}\inner{\pi^k_{X}(e_{\alpha}),e_{\alpha}}e_{\alpha}(x)\wedge\ast\overline{e_{\alpha}(x)}.
$$
\end{defn}

\begin{rmk}
One can easily show that the above definitions are independent of the choice of the orthonormal basis $\{e_{\alpha}\}$.
Moreover, the Schwartz kernel and its trace form can be represented as:
    $$
    \mathcal{K}^k_X(x,y)=\sum_{\alpha} u_{\alpha}(x)\otimes \overline{u_{\alpha}(y)}
    {\ \ \ \ \rm and \ \ \ \ }
    {\rm tr}\mathcal{K}^k_X=\sum_{\alpha} u_{\alpha}\wedge\ast\overline{u_{\alpha}},
    $$
where $\{u_{\alpha}\}_{\alpha}$ is an orthonormal basis for the Hilbert space $\mathcal{H}^k_{(2)}(X)$.
\end{rmk}
The name of the Schwartz kernel comes from the property that for all $u\in A^k_{(2)}(X)$,
$$
\pi^k_{X}(u)(x)
=\inner{u(\cdot),\mathcal{K}^k_X(\cdot,x)}_{X}
=\int_{X} u(y)\wedge\ast\overline{\mathcal{K}^k_X(y,x)},
$$
where $\overline{ \mathcal{K}^k_X(y,x)}=\sum_{\alpha,\beta}\inner{\pi^k_{X}(e_{\alpha}),e_{\beta}} \overline{e_{\beta}(y)}\otimes e_{\alpha}(x)$.
Moreover, for all $u\in \mathcal{H}^k_{(2)}(X)\subset A^k_{(2)}(X)$, we have the following {\it reproducing property}:
$$
u(x)
=\int_{X} u(y)\wedge\ast\overline{\mathcal{K}^k_X(y,x)}.
$$

Note that if $X$ is compact, then
$$
{\rm dim}_{\mathbb{C}}(\mathcal{H}^k(X))
={\rm Tr}(\pi^k_{X}):=\sum_{\alpha}\inner{\pi^k_{X}(e_{\alpha}),e_{\alpha}}
=\int_X{\rm tr}\mathcal{K}^k_X.
$$

Consider a Galois covering $\phi:Y\rightarrow X$ of a compact Riemannian manifold $X$ with the group of deck transformations $\mathbf{G}=\pi_1(X)/\pi_1(Y)$.
Let $F$ be a {\it fundamental domain}, i.e., an open subset $F\subset Y$ satisfying
\begin{itemize}
    \item[(1)] $Y=\bigcup_{g\in\mathbf{G}}g(\overline{F})$,
    \item[(2)] $g(F)\cap h(F)=\emptyset$ for all $g\neq h\in \mathbf{G}$,
    \item[(3)] $\overline{F}\setminus F$ has zero measure.
\end{itemize}
Note that $A^k_{(2)}(Y)\simeq\ell^2(\mathbf{G})\otimes A^k_{(2)}(F)$ is a free Hilbert $\mathbf{G}$-module by the isomorphism
$$
u\simeq \sum_{g\in\mathbf{G}}\delta_g\otimes g^{*}(u|_{g(F)}),
$$
where $g^{\ast}(u|_{g(F)})$ is the pull-back of the restriction form $u|_{g(F)}$ on $g(F)$ to $F$ by the $g$-action.
Then $\mathcal{H}^k_{(2)}(Y)$ is a projective Hilbert $\mathbf{G}$-module (embedded into $A^k_{(2)}(Y)$) so that
$$
{\rm dim}_{\mathbf{G}}(\mathcal{H}^k_{(2)}(Y))
:=
{\rm Tr}_{\mathbf{G}}(\pi^k_{Y})=\int_F{\rm tr}\mathcal{K}^k_Y.
$$
If $Y$ is compact and $|\mathbf{G}|$ is finite, Remark \ref{rmk: finite von Neumann} implies that
$$
{\rm dim}_{\mathbf{G}}(\mathcal{H}^k_{(2)}(Y))
=\frac{1}{|\mathbf{G}|}{\rm dim}_{\mathbb{C}}(\mathcal{H}^k(Y))
=\frac{1}{|\mathbf{G}|}\int_Y{\rm tr}\mathcal{K}^k_Y.
$$
Therefore, the Kazhdan equality for harmonic spaces (Corollary \ref{cor: Riemann hodge Kazhdan eq}) implies that

\begin{cor} \label{cor: Riemann Schwartz Kazhdan eq}
Let $\{\phi_j:X_j\rightarrow X\}$ be a tower of coverings of a compact Riemannian manifold $X$ converging to an infinite Galois covering $\phi:\widetilde{X}\rightarrow X$. Then
$$
\int_{\widetilde{F}} {\rm tr}\mathcal{K}^k_{\widetilde{X}}
=\lim\limits_{j\rightarrow\infty}\int_{F_j} {\rm tr}\mathcal{K}^k_{X_j}
=\lim\limits_{j\rightarrow\infty}\frac{1}{|\mathbf{G}_j|}\int_{X_j}{\rm tr}\mathcal{K}^k_{X_j},
$$
where $\widetilde{F}$ and $F_j$ are fundamental domains of $\phi:\widetilde{X}\rightarrow X$ and $\phi_j:X_j\rightarrow X$, respectively.
\end{cor}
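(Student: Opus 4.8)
The plan is to read this corollary as nothing more than Corollary~\ref{cor: Riemann hodge Kazhdan eq} re-expressed through the trace-form identities established in the paragraph immediately preceding the statement. All three quantities in the asserted chain of equalities are, term by term, the three quantities of Corollary~\ref{cor: Riemann hodge Kazhdan eq}, translated from von Neumann dimensions into integrals of ${\rm tr}\mathcal{K}^k$; no further analysis is required.

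First I would record the translation dictionary. For the infinite covering $\phi:\widetilde{X}\rightarrow X$ with deck group $\mathbf{G}$, the space $\mathcal{H}^k_{(2)}(\widetilde{X})$ embeds as a projective Hilbert $\mathbf{G}$-module into the free module $A^k_{(2)}(\widetilde{X})\simeq\ell^2(\mathbf{G})\otimes A^k_{(2)}(\widetilde{F})$, and the formula ${\rm Tr}_{\mathbf{G}}(\pi^k_{\widetilde{X}})=\int_{\widetilde{F}}{\rm tr}\mathcal{K}^k_{\widetilde{X}}$ gives
$$
{\rm dim}_{\mathbf{G}}\mathcal{H}^k_{(2)}(\widetilde{X})=\int_{\widetilde{F}}{\rm tr}\mathcal{K}^k_{\widetilde{X}}.
$$
For each finite covering $\phi_j:X_j\rightarrow X$, the same trace formula gives ${\rm dim}_{\mathbf{G}_j}\mathcal{H}^k_{(2)}(X_j)=\int_{F_j}{\rm tr}\mathcal{K}^k_{X_j}$; since $X_j$ is compact and $|\mathbf{G}_j|$ is finite, Remark~\ref{rmk: finite von Neumann} identifies this further with $\frac{1}{|\mathbf{G}_j|}\int_{X_j}{\rm tr}\mathcal{K}^k_{X_j}$.

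With these identities in hand, I would simply substitute them into the chain
$$
{\rm dim}_{\mathbf{G}}\mathcal{H}^k_{(2)}(\widetilde{X})=\lim_{j\rightarrow\infty}{\rm dim}_{\mathbf{G}_j}\mathcal{H}^k_{(2)}(X_j)=\lim_{j\rightarrow\infty}\frac{{\rm dim}_{\mathbb{C}}\mathcal{H}^k(X_j,\mathbb{C})}{|\mathbf{G}_j|}
$$
furnished by Corollary~\ref{cor: Riemann hodge Kazhdan eq}, and the three integral expressions follow at once. Since the entire analytic content has already been absorbed into that corollary, I do not expect any genuine obstacle here. The only point meriting a line of care is the equality $\int_{F_j}{\rm tr}\mathcal{K}^k_{X_j}=\frac{1}{|\mathbf{G}_j|}\int_{X_j}{\rm tr}\mathcal{K}^k_{X_j}$: because the harmonic projection $\pi^k_{X_j}$ is $\mathbf{G}_j$-equivariant and the metric on $X_j$ is $\mathbf{G}_j$-invariant, the trace form ${\rm tr}\mathcal{K}^k_{X_j}$ is $\mathbf{G}_j$-invariant, so the decomposition $X_j=\bigcup_{g\in\mathbf{G}_j}g(\overline{F_j})$ exhibits $X_j$ as $|\mathbf{G}_j|$ translates of $F_j$ each carrying equal mass. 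Beyond this bookkeeping, the proof reduces to matching the three terms correctly and confirming that the trace-form representation of the von Neumann dimension is valid both for the finite coverings and for the infinite top covering.
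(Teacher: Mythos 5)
Your proposal is correct and matches the paper's own argument: the paper likewise treats this corollary as an immediate translation of Corollary~\ref{cor: Riemann hodge Kazhdan eq}, using the identity ${\rm dim}_{\mathbf{G}}(\mathcal{H}^k_{(2)}(Y))={\rm Tr}_{\mathbf{G}}(\pi^k_Y)=\int_F{\rm tr}\mathcal{K}^k_Y$ coming from the free-module structure $A^k_{(2)}(Y)\simeq\ell^2(\mathbf{G})\otimes A^k_{(2)}(F)$, together with Remark~\ref{rmk: finite von Neumann} for the finite coverings. Your closing observation on the $\mathbf{G}_j$-invariance of ${\rm tr}\mathcal{K}^k_{X_j}$ is the same bookkeeping the paper absorbs into that remark, so there is nothing to add.
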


\begin{rmk}
    Note that the trace form of the Schwartz kernel is invariant under $\mathbf{G}$-action so that its push-forward is well-defined.
    Therefore, the above corollary can be written as
    $$
\int_X \phi_{\ast}({\rm tr}\mathcal{K}^k_{\widetilde{X}})
=\lim\limits_{j\rightarrow\infty}\int_{X} \phi_{j\ast}({\rm tr}\mathcal{K}^k_{X_j}).
$$
\end{rmk}

\begin{rmk}
    The above Corollary was proved by Cheeger and Gromov \cite{cheeger1985characteristic,cheeger1985bounds} using the heat kernel method when $\widetilde{X}$ is the universal covering and $X_j$ are (possibly non-compact) manifolds with finite volume and bounded geometry.
\end{rmk}

    

\subsection{Convergence of canonical forms}

In \cite{BaikShokriehWu20}, Baik, Shokrieh and Wu introduced the concept of the canonical measures for Riemann surfaces (see also \cite{shokrieh2019canonical}).
Similarly, for higher dimensional Riemannian manifolds, we can define the following:

\begin{defn}
Let $X$ be a Riemannian manifold and
let $\{u_{\alpha}\}_{\alpha\in J}$ be an orthonormal basis for $\mathcal{H}^k_{(2)}(X)$.
The {\it canonical measure} (of degree $k$) on $X$ is defined by
$$
\mu^k_{X}(U):=\sum\limits_{\alpha\in J}\int_U u_\alpha\wedge\ast\overline{u_\alpha}\ \ 
\left(=\int_U {\rm tr}\mathcal{K}^k_{X}\right)
$$
for any Borel subset $U\subset X$.
\end{defn} 

Consider a Galois covering $\phi:Y\rightarrow X$ of a compact Riemannian manifold $X$ with the group of deck transformations $\mathbf{G}=\pi_1(X)/\pi_1(Y)$.
Since the canonical measure $\mu^k_{Y}$ is invariant under isometries, its push-forward to $X$ is well-defined.
Denote it by $\mu^k_{\phi}$.
Then we have
$$
\mu^k_{\phi}(X)={\rm Tr}_{\mathbf{G}}(\pi^k_Y)
    ={\rm dim}_{\mathbf{G}}(\mathcal{H}^k_{(2)}(Y)).
$$
Moreover, if it is finite covering, then $\mu^k_{\phi}(X)=\mu^k_{Y}(Y)/|\mathbf{G}|$.
Therefore, Theorem 5.3 in \cite{BaikShokriehWu20} can be generalized to higher dimensional cases in the following form.

\begin{cor}
Let $\{X_j\}$ be a tower of coverings of a compact Riemannian manifold $X$ converging to an infinite Galois covering $ \widetilde{X} \rightarrow X$.
Then for any Borel subset $U\subset X$,
$$
\lim_{j\rightarrow\infty}\mu^k_{\phi_j}(U)=\mu^k_{\phi}(U).
$$
\end{cor}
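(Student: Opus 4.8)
The plan is to push everything down to the base and prove that the densities converge in $L^1(X)$, which yields convergence on every Borel set at once. Since each trace form is a volume form, $\mu^k_{\phi_j}$ and $\mu^k_\phi$ are absolutely continuous with respect to $dV_g$; write $\mu^k_{\phi_j}=f_j\,dV_g$ and $\mu^k_\phi=f_\infty\,dV_g$. By the $\mathbf{G}_j$- and $\mathbf{G}$-invariance of the trace forms, $f_j(x)$ (resp. $f_\infty(x)$) is the value of $\mathrm{tr}\,\mathcal{K}^k_{X_j}$ (resp. $\mathrm{tr}\,\mathcal{K}^k_{\widetilde{X}}$) at any lift of $x$, so these densities are well defined. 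In this language Corollary \ref{cor: Riemann Schwartz Kazhdan eq} is exactly the convergence of total masses, $\int_X f_j\,dV_g\to\int_X f_\infty\,dV_g$. Since $\mu^k_{\phi_j}(U)=\int_U f_j\,dV_g$ and likewise for the limit, it suffices to show $\|f_j-f_\infty\|_{L^1(X)}\to 0$, for then $|\mu^k_{\phi_j}(U)-\mu^k_\phi(U)|\le\|f_j-f_\infty\|_{L^1(X)}$ uniformly over all Borel $U\subset X$.

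The analytic heart is the pointwise estimate $\limsup_{j\to\infty}f_j(x)\le f_\infty(x)$. First, the densities are uniformly bounded: by the interior mean-value (Moser) inequality for harmonic forms, $|u(\hat x)|^2\le C\,\|u\|^2_{L^2}$ with $C$ depending only on the geometry of a fixed-radius ball, hence uniform across the tower since the metrics are all pulled back from the compact $X$; summing the reproducing-kernel values $\sup_{\|u\|\le 1}|\langle u(\hat x),\xi\rangle|^2=\langle\mathcal{K}^k(\hat x,\hat x)\xi,\xi\rangle$ over an orthonormal basis of the finite-dimensional space $\bigwedge^k T^*_{\hat x}$ gives $0\le f_j\le C'$ with $C'$ independent of $j$. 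To obtain the upper bound, fix $x$, lift to $\tilde x\in\widetilde{X}$, and for each direction $\xi$ choose unit-norm harmonic forms $u_j$ on $X_j$ nearly realizing the extremal value in that direction; lift them to $\mathbf{H}_j$-periodic harmonic forms $\tilde u_j$ on $\widetilde{X}$. For any fixed $R$, the ball $B(\tilde x,R)$ embeds into $X_j$ once $j$ is large, since the injectivity radius of $\widetilde{X}\to X_j$ at $\tilde x$ tends to infinity (as $\bigcap_j\pi_1(X_j)=\pi_1(\widetilde{X})$ and the deck action is properly discontinuous), so $\|\tilde u_j\|_{L^2(B(\tilde x,R))}\le\|u_j\|_{L^2(X_j)}\le 1$. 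By interior elliptic regularity a subsequence converges in $C^\infty_{\mathrm{loc}}$ to a harmonic form $u_\infty$ on $\widetilde{X}$, and letting $R\to\infty$ gives $\|u_\infty\|_{L^2(\widetilde{X})}\le 1$. As $u_\infty$ is then admissible in the extremal characterization of $\langle\mathcal{K}^k_{\widetilde{X}}(\tilde x,\tilde x)\xi,\xi\rangle$, summing over the basis of directions yields $\limsup_j f_j(x)\le f_\infty(x)$.

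Finally I combine the two inputs by a Scheffé-type argument. From $\limsup_j f_j\le f_\infty$ we get $(f_j-f_\infty)^+\to 0$ almost everywhere, and since $0\le(f_j-f_\infty)^+\le C'$ on the compact manifold $X$, dominated convergence gives $\int_X(f_j-f_\infty)^+\,dV_g\to 0$. The mass convergence from Corollary \ref{cor: Riemann Schwartz Kazhdan eq} gives $\int_X(f_j-f_\infty)\,dV_g\to 0$, whence $\int_X(f_j-f_\infty)^-\,dV_g\to 0$ as well; adding the two, $\|f_j-f_\infty\|_{L^1(X)}\to 0$, which proves the corollary. The main obstacle is precisely the upper semicontinuity $\limsup_j f_j\le f_\infty$: the direct comparisons between harmonic forms on $X_j$ and on $\widetilde{X}$ fail, because pullbacks from $X_j$ are not $L^2$ on $\widetilde{X}$ and averages over the infinite group $\mathbf{H}_j$ do not converge, so the estimate has to be extracted through the normal-families and injectivity-radius argument above rather than from any naive monotonicity.
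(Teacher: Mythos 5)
Your proof is correct, but it takes a genuinely different route from the paper's. The paper obtains this corollary by carrying over the argument of Theorem 5.3 of Baik--Shokrieh--Wu, whose two inputs are: (i) the inequality $\limsup_{j}\mu^k_{\phi_j}(U)\le\mu^k_{\phi}(U)$ for every Borel $U\subset X$, proved exactly as in Proposition \ref{prop: Kazhdan inequality} but integrating over the lift of $U$ inside the fundamental domains --- that is, it comes from the Hilbert-space monotonicity of harmonic projections under restriction of domains (the $k$-form analogue of Proposition \ref{prop: properties of kernels}), with no pointwise estimates at all; and (ii) the convergence of total masses (Corollary \ref{cor: Riemann Schwartz Kazhdan eq}). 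The limit then follows by the complement trick: applying (i) to $X\setminus U$ and subtracting from (ii) yields $\liminf_j\mu^k_{\phi_j}(U)\ge\mu^k_{\phi}(U)$. You replace ingredient (i) by a pointwise upper semicontinuity of the densities, $\limsup_j f_j\le f_\infty$, proved by geometric analysis (growth of the injectivity radius of $\widetilde X\to X_j$ along the tower, interior elliptic estimates with constants uniform because the local geometry is pulled back from compact $X$, normal families, and the extremal characterization of the projection kernel), and you replace the complement trick by a uniform $L^\infty$ bound plus a Scheff\'e argument. Both routes are sound; all the steps in yours check out (in particular the injectivity-radius claim follows correctly from $\bigcap_j\pi_1(X_j)=\pi_1(\widetilde X)$ and proper discontinuity, and the $C^\infty_{\mathrm{loc}}$ limit is a genuine $L^2$-harmonic form since $\widetilde X$ is complete). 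What your route buys: a strictly stronger conclusion --- $L^1$, i.e.\ total-variation, convergence of the densities, hence convergence uniform over all Borel sets --- together with a pointwise statement that is the density-level analogue of Proposition \ref{prop: Kazhdan inequality} and is close in spirit to what the paper does in Section \ref{sec 5} for the Bergman kernel via reproducing kernels. What the paper's route buys: it is much softer, requiring no elliptic estimates, compactness arguments, or uniform bounds, since the kernel monotonicity and the mass convergence are already in place, and the complement trick finishes the proof in two lines.
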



\section{Tower of coverings of compact K\"{a}hler manifolds} \label{sec 4}
In this section, we will prove Theorem \ref{thm B}. 
More precisely, for a tower of coverings of compact K\"{a}hler manifolds, we will show the convergence of $L^2$-Hodge numbers using properties of the Schwartz kernels.

\subsection{Schwartz kernels of K\"ahler manifolds}
Let $(X,g)$ be a (possibly non-compact) K\"{a}hler manifold of dimension $n$.
Using the decomposition
$
T^{\ast}_{\mathbb{C}}X=T^{\ast'}X\oplus T^{\ast''}X,
$
we have
$$
A_{(2)}^{k}(X)=\bigoplus\limits_{p+q=k}A_{(2)}^{p,q}(X),
$$
where $A_{(2)}^{p,q}(X)$ is the Hibert space completion of complex differential $(p,q)$-forms on $X$, i.e.
smooth sections of the bundle $\bigwedge^p T^{\ast'}X\otimes\bigwedge^q T^{\ast''}X$.
Moreover, the densely defined operator
$
\overline{\partial}:A_{(2)}^{p,q}(X)\rightarrow A_{(2)}^{p,q+1}(X)
$
gives us the $\overline{\partial}$-Laplacian operator
$
\Delta_{\overline{{\partial}}}:=\overline{\partial}\overline{\partial}^{\ast}+\overline{\partial}^{\ast}\overline{\partial}.
$
Then the space of $L^2$-harmonic $(p,q)$-forms is defined by
$$
\mathcal{H}^{p,q}_{(2)}(X):=\{u\in A^{p,q}_{(2)}(X)\ |\ \Delta_{\overline{{\partial}}} u=0\}.
$$

Denote the projection operator by $\pi^{p,q}_X:A^{p,q}_{(2)}(X)\rightarrow\mathcal{H}^{p,q}_{(2)}(X)$.
Let $\{e_{\alpha}\}$ and $\{u_{\alpha}\}$ be orthonormal bases for $A^{p,q}_{(2)}(X)$ and $\mathcal{H}^{p,q}_{(2)}(X)$, respectively.

\begin{defn} \label{def: Schwartz kernel of (p,q)}
The {\it Schwartz kernel} $\mathcal{K}^{p,q}_{X}$ of $\pi^{p,q}_X$ is defined by
$$
\mathcal{K}^{p,q}_{X}(x,y)
:=\sum_{\alpha,\beta}\inner{\pi^{p,q}_{X}(e_{\alpha}),e_{\beta}}e_{\alpha}(x)\otimes \overline{e_{\beta}(y)}
=\sum_{\alpha} u_{\alpha}(x)\otimes \overline{u_{\alpha}(y)}.
$$
The {\it trace form} ${\rm tr}\mathcal{K}^{p,q}_{X}$ of the Schwartz kernel $\mathcal{K}^{p,q}_{X}$ is defined by
$$
{\rm tr}\mathcal{K}^{p,q}_{X}
:=\sum_{\alpha}\inner{\pi^{p,q}_{X}(e_{\alpha}),e_{\alpha}}e_{\alpha}\wedge\ast\overline{e_{\alpha}}
=\sum_{\alpha} u_{\alpha}\wedge\ast\overline{u_{\alpha}}.
$$
\end{defn}
As in the case of Riemannian manifolds, the above definitions are independent of the choice of the orthonormal basis and it satisfies that for all $u\in A^{p,q}_{(2)}(X)$, we have 
$$
\pi^{p,q}_{X}(u)(x)
=\int_{X} u(y)\wedge\ast\overline{\mathcal{K}^{p,q}_X(y,x)}.
$$
Therefore, for all $u\in \mathcal{H}^{p,q}_{(2)}(X)$, we have the {\it reproducing property}:
$$
u(x)
=\int_{X} u(y)\wedge\ast\overline{\mathcal{K}^{p,q}_X(y,x)}.
$$


\begin{rmk}
When $p=n$ and $q=0$, the trace form $\mathcal{B}_{X}:={\rm tr}\mathcal{K}^{n,0}_{X}$ is independent of the K\"{a}hler metric. 
This is called the (diagonal) {\it Bergman kernel form} of $X$.
\end{rmk}

Note that as in the case of Riemmannian manifolds, for a Galois covering $Y$ of a compact K\"{a}hler manifold $X$,
$\mathcal{H}^{p,q}_{(2)}(Y)$ is a projective Hilbert $\mathbf{G}$-module which is embedded into the free Hilbert $\mathbf{G}$-module $A^{p,q}_{(2)}(Y)$.
Thus for a fundamental domain $F$, we have
$$
{\rm dim}_{\mathbf{G}}(\mathcal{H}^{p,q}_{(2)}(Y))
:=
{\rm Tr}_{\mathbf{G}}(\pi^{p,q}_{Y})=\int_F{\rm tr}\mathcal{K}^{p,q}_Y.
$$
Remark \ref{rmk: finite von Neumann} implies that if $Y$ is compact and $|\mathbf{G}|$ is finite, then we have
$$
{\rm dim}_{\mathbf{G}}(\mathcal{H}^{p,q}_{(2)}(Y))
=\frac{1}{|\mathbf{G}|}{\rm dim}_{\mathbb{C}}(\mathcal{H}^{p,q}(Y))
=\frac{1}{|\mathbf{G}|}\int_Y{\rm tr}\mathcal{K}^{p,q}_Y.
$$

\begin{defn}
Let $Y\rightarrow X$ be a Galois covering of a compact K\"{a}hler manifold $X$, with the group of deck transformations $\mathbf{G}=\pi_1(X)/\pi_1(Y)$.
The $L^2$-{\it Hodge number} (of bi-degree $(p,q)$) for $Y$ is defined by
$$
h^{p,q}_{(2)}(Y)
:={\rm dim}_{\mathbf{G}}(\mathcal{H}^{p,q}_{(2)}(Y)).
$$
\end{defn}

\begin{rmk}
To justify the definition, note that if $Y$ is compact, the Hodge decomposition theorem implies that the Hodge number satisfies that
$$
h^{p,q}(Y)
:={\rm dim}_{\mathbb{C}}(H^{p,q}(Y))
={\rm dim}_{\mathbb{C}}(\mathcal{H}^{p,q}(Y))
={\rm Tr}(\pi^{p,q}_{Y})
=\int_Y{\rm tr}\mathcal{K}^{p,q}_Y.
$$   
Therefore if $|\mathbf{G}|$ is finite, we have $h^{p,q}_{(2)}(Y)=h^{p,q}(Y)/|\mathbf{G}|$.
In fact, even for non-compact cases we have the following $\mathbf{G}$-equivariant isomorphism by the $L^2$-Hodge decomposition (cf. Theorem 11.27 in \cite{Luck02}):
$$
H^{p,q}_{(2)}(Y)\simeq\mathcal{H}^{p,q}_{(2)}(Y),
$$
where $H^{p,q}_{(2)}(Y)$ is the reduced $L^2$-Dolbeault coholmology.
\end{rmk}

\subsection{Kazhdan's equality for compact K\"{a}hler manifolds}
Let $X$ be a K\"{a}hler manifold.
For any open subset $U\subset X$, denote by ${\rm tr}\mathcal{K}^{p,q}_{U}$ the trace form of Schwartz kernel of the orthogonal projection $\pi^{p,q}_U:A^{p,q}_{(2)}(U)\rightarrow\mathcal{H}^{p,q}_{(2)}(U)$.
Then we have the following properties.

\begin{prop}
 \label{prop: properties of kernels}
    Let $X$ be a K\"{a}hler manifold and $U\subset X$ be an open subset.
    \begin{itemize}
        \item[(a)] For any open subset $V\subset X$ containing $U$,  we have
        $$
        \int_U {\rm tr}\mathcal{K}^{p,q}_{V}\geq
        \int_U {\rm tr}\mathcal{K}^{p,q}_{X}.
        $$
        \item[(b)] Let $U_1\subset U_2\subset\cdots\subset X$ be open subsets satisfying $X=\bigcup_mU_m$. 
        Then we have
        $$
        \lim_{m\rightarrow\infty}\int_U {\rm tr}\mathcal{K}^{p,q}_{U_m}=
        \int_U {\rm tr}\mathcal{K}^{p,q}_{X}.
        $$
    \end{itemize}
\end{prop}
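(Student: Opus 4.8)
The plan is to reduce both parts to a single variational characterization of the pointwise density of the trace form. Fix $x\in X$ and write $\Lambda_x:=\bigwedge^{p}T^{\ast'}_{x}X\otimes\bigwedge^{q}T^{\ast''}_{x}X$ for the fibre, with the Hermitian inner product induced by $g$. Evaluation $u\mapsto u(x)$ is a bounded map $\mathcal{H}^{p,q}_{(2)}(X)\to\Lambda_x$ (this is exactly the content of the reproducing property following Definition \ref{def: Schwartz kernel of (p,q)}, and may also be seen from interior elliptic estimates for $\Delta_{\overline{\partial}}$), so for each $\xi\in\Lambda_x$ the Riesz representation theorem furnishes $K_{x,\xi}\in\mathcal{H}^{p,q}_{(2)}(X)$ with $\langle u,K_{x,\xi}\rangle_X=\langle u(x),\xi\rangle$ for all $u$. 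Writing $\mathrm{tr}\,\mathcal{K}^{p,q}_X=\big(\sum_\alpha|u_\alpha(x)|^2\big)\,dV_g$ in an orthonormal basis $\{u_\alpha\}$ of $\mathcal{H}^{p,q}_{(2)}(X)$ and expanding $|u_\alpha(x)|^2$ in an orthonormal basis $\{\xi_j\}$ of $\Lambda_x$, Parseval's identity gives the pointwise density
\[
T_X(x):=\sum_{\alpha}|u_{\alpha}(x)|^2=\sum_{j}\|K_{x,\xi_j}\|_X^2=\sum_{j}\ \sup_{\substack{u\in\mathcal{H}^{p,q}_{(2)}(X)\\ \|u\|_X\le 1}}\big|\langle u(x),\xi_j\rangle\big|^2 ,
\]
the last equality being Cauchy--Schwarz, with equality attained at $u=K_{x,\xi_j}/\|K_{x,\xi_j}\|_X$. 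This extremal formula is the engine for both parts.

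For (a) I will first prove the pointwise inequality $T_V(x)\ge T_X(x)$ for every $x\in V$. The key observation is that restriction $u\mapsto u|_V$ carries $\mathcal{H}^{p,q}_{(2)}(X)$ into $\mathcal{H}^{p,q}_{(2)}(V)$: harmonicity is a local condition, and $\|u|_V\|_V\le\|u\|_X$, while the value $\langle u(x),\xi_j\rangle$ at $x\in V$ is unchanged. Thus every admissible competitor for the supremum defining a summand of $T_X(x)$ yields, after restriction and rescaling, a competitor for the corresponding summand of $T_V(x)$ of at least the same size, so each summand can only grow. Integrating $T_V\ge T_X$ over $U\subset V$ against the common volume form $dV_g$ gives (a).

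For (b), the same restriction principle applied to the nested family shows that $T_{U_m}(x)$ is nonincreasing in $m$ (once $x\in U_m$) and bounded below by $T_X(x)$, hence converges to some $L(x)\ge T_X(x)$; it suffices to prove $L(x)=T_X(x)$ pointwise. Fix $x$ and $\xi_j$ and let $w_m\in\mathcal{H}^{p,q}_{(2)}(U_m)$ be the normalized extremizers $K^{U_m}_{x,\xi_j}/\|K^{U_m}_{x,\xi_j}\|_{U_m}$, so $\|w_m\|_{U_m}=1$. On any fixed compact set the $w_m$ are eventually defined and, being $\Delta_{\overline{\partial}}$-harmonic with uniformly bounded local $L^2$ norm, are uniformly bounded in every $C^k$-norm by interior elliptic estimates; a diagonal argument then extracts a subsequence converging in $C^\infty_{\mathrm{loc}}$ to a harmonic form $w_\infty$ on $X$. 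By Fatou's lemma $\|w_\infty\|_X\le\liminf_m\|w_m\|_{U_m}=1$, so $w_\infty\in\mathcal{H}^{p,q}_{(2)}(X)$ is an admissible competitor for the $X$-supremum, and passing to the limit gives $L(x)=\lim_m|\langle w_m(x),\xi_j\rangle|^2=|\langle w_\infty(x),\xi_j\rangle|^2$, which is at most the $j$-th summand of $T_X(x)$; combined with the reverse inequality from monotonicity this forces equality. Summing over $j$ yields $T_{U_m}(x)\downarrow T_X(x)$ pointwise. Finally, when $U$ is relatively compact (as in all our applications) one has $\overline U\subset U_{m_0}$ for some $m_0$, so the smooth densities $T_{U_m}$ are dominated on $U$ by the integrable density $T_{U_{m_0}}$, and the monotone (equivalently dominated) convergence theorem upgrades the pointwise statement to the integrals, proving (b).

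The main obstacle is the pointwise limit in (b): one must manufacture, out of the extremal harmonic forms $w_m$ living on the growing domains $U_m$, an honest $L^2$-harmonic form on all of $X$. The interplay of interior elliptic regularity (yielding $C^\infty_{\mathrm{loc}}$ compactness of the normalized $w_m$) with Fatou's lemma (keeping the limit in $L^2$ with norm $\le 1$) is exactly what prevents energy from escaping to infinity; the monotonicity from (a) then pins the limit from below, and the two bounds meet.
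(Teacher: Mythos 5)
Your proof is correct, but it takes a genuinely different route from the paper's. The paper (following Propositions 3.4 and 3.5 of \cite{BaikShokriehWu20}) never argues pointwise: it rewrites the integrated trace as $\int_U {\rm tr}\mathcal{K}^{p,q}_{V}=\sum_{\alpha}\inner{\pi^{p,q}_V(e^U_\alpha),e^U_\alpha}$ for an orthonormal basis $\{e^U_\alpha\}$ of $A^{p,q}_{(2)}(U)$, introduces the auxiliary space $\mathcal{H}'(V)\subset A^{p,q}_{(2)}(X)$ of globally $L^2$ forms that are harmonic on $V$, and then deduces (a) from monotonicity of projections onto the nested subspaces $\mathcal{H}^{p,q}_{(2)}(X)\subset\mathcal{H}'(V)$, and (b) from strong convergence of the projections onto the decreasing subspaces $\mathcal{H}'(U_m)$, whose intersection is $\mathcal{H}^{p,q}_{(2)}(X)$ because harmonicity is local. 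You instead prove the stronger pointwise statements $T_V\geq T_X$ on $V$ and $T_{U_m}\downarrow T_X$, via the extremal (reproducing-kernel) characterization of the trace density together with, for (b), a normal-families argument (interior elliptic estimates, $C^\infty_{\rm loc}$ compactness, Fatou). This is exactly the classical proof of Ramadanov's theorem---which the paper's own remark invokes for the $(n,0)$ case---extended to all bidegrees. Your route buys locally uniform convergence of the densities themselves, which is stronger than the integrated claim and self-contained (no reliance on the cited propositions of \cite{BaikShokriehWu20}); the paper's route buys softness: it is purely Hilbert-space-theoretic, uses no elliptic estimates or compactness, and works with the integrated quantity directly, so no passage from pointwise to integral convergence is needed.

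Two caveats, neither of which I regard as a genuine gap. First, your restriction step uses that harmonicity on an open subset is the \emph{local} condition ($\overline{\partial}u=0$ and $u$ formally coclosed, equivalently $\Delta_{\overline{\partial}}u=0$ distributionally); had one defined $\mathcal{H}^{p,q}_{(2)}$ of an open set via the Hilbert-space adjoint of $\overline{\partial}$, the condition would involve boundary behavior and restriction would not obviously preserve it. This is the same convention the paper and \cite{BaikShokriehWu20} implicitly adopt (their $\mathcal{H}'(V)$ only makes sense for the local notion), so you are consistent with the paper, but it deserves a sentence. Second, your passage from pointwise to integrated convergence in (b) needs an integrable dominating density, which you produce only for relatively compact $U$. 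This restriction is not a defect but essentially necessary: for arbitrary open $U$ the integrated statement can fail even though the pointwise one holds (take $X=U=\CC$ with the flat metric, $p=q=0$, and $U_m$ the disc of radius $m$: then $\int_U {\rm tr}\mathcal{K}^{0,0}_{U_m}=+\infty$ for every $m$, while $\mathcal{H}^{0,0}_{(2)}(\CC)=0$), and the paper's own proof of (b) tacitly makes the same interchange of a limit with an infinite sum of decreasing terms, which requires the same kind of finiteness hypothesis. Since every application in the paper takes $U$ to be a fundamental domain over a compact base, your hypothesis covers all of them.
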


\begin{rmk}
    For the Bergman kernel form $\mathcal{B}_{X}:={\rm tr}\mathcal{K}^{n,0}_{X}$, (b) follows from the well-known theorem by Ramadanov.
\end{rmk}

\begin{proof}
This proposition was proved in the case of Riemann surfaces in Proposition 3.5 in \cite{BaikShokriehWu20} in terms of canonical measures.
In fact, this can be generalized to our case using the same proof.
We briefly sketch the proof for reader's convenience.

By using Tonelli's theorem and Parseval's theorem, one can prove that for any open subsets $U\subset V\subset X$, 
$$
\int_U {\rm tr}\mathcal{K}^{p,q}_{V}=\sum_{\alpha}\inner{\pi^{p,q}_V(e^U_\alpha),e^U_\alpha},
$$
where $\{e^U_\alpha\}$ is an orthonormal basis for $A^{p,q}_{(2)}(U)$ (cf. Proposition 3.4 in \cite{BaikShokriehWu20}).

Let $\mathcal{H}'(V)$ be the Hilbert subspace of $A^{p,q}_{(2)}(X)$ consisting of $L^2$-$(p,q)$-forms on $X$ which are harmonic on $V$.
Then we have
\begin{equation}\label{eqn: projection}
    \inner{\pi^{p,q}_V(e^U_\alpha),e^U_\alpha}=\inner{\pi'_V(e^U_\alpha),e^U_\alpha},
\end{equation}
where $\pi':A^{p,q}_{(2)}(X)\rightarrow\mathcal{H}'(V)$ is the orthogonal projection (see (3.4) in \cite{BaikShokriehWu20}).

Therefore, (a) follows from (\ref{eqn: projection}) and the fact that 
$$
\inner{\pi'_V(e^U_\alpha),e^U_\alpha}\geq \inner{\pi^{p,q}_X(e^U_\alpha),e^U_\alpha},
$$
since $\mathcal{H}^{p,q}_{(2)}(X)\subset \mathcal{H}'(V)$.
For (b), note that since $\mathcal{H}'(U_m)$ is a decreasing sequence of Hilbert subspaces converging to $\mathcal{H}^{p,q}_{(2)}(X)$, we have
$$
\lim_{m\rightarrow\infty}\inner{\pi'_{U_m}(e^U_\alpha),e^U_\alpha}=\inner{\pi^{p,q}_X(e^U_\alpha),e^U_\alpha},
$$
where $\pi'_{U_m}:A^{p,q}_{(2)}(X)\rightarrow\mathcal{H}'(U_m)$ is the orthogonal projection.
Then the result follows from (\ref{eqn: projection}) (for $U_m$ instead of $V$).
\end{proof}

Let $\{\phi_j:X_j\rightarrow X\}$ be a tower of coverings of a compact K\"{a}hler manifold $X$ converging to an infinite Galois covering $\phi:\widetilde{X}\rightarrow X$. 
Denote their $L^2$-Hodge numbers by $h^{p,q}_{(2)}(X_j)$ and $h^{p,q}_{(2)}(\widetilde{X})$, respectively.

\begin{prop}\label{prop: Kazhdan inequality}
We have the following Kazhdan type inequality:
\begin{equation} \label{Kazhdan}
\limsup\limits_{j\rightarrow\infty}h^{p,q}_{(2)}(X_j)\leq
h^{p,q}_{(2)}(\widetilde{X}).
\end{equation}
\end{prop}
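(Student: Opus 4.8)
The plan is to adapt the fundamental-domain argument of Baik--Shokrieh--Wu to the higher-dimensional setting, reducing the inequality to a comparison of trace forms on a single fixed relatively compact piece of $\widetilde{X}$ and then invoking the monotonicity in Proposition \ref{prop: properties of kernels}. Throughout, let $\widetilde{\phi}_j : \widetilde{X} \to X_j$ denote the intermediate covering with $\phi = \phi_j \circ \widetilde{\phi}_j$, and let $\mathbf{H}_j := \pi_1(X_j)/\pi_1(\widetilde{X})$ be its (infinite) deck group, a decreasing sequence of subgroups of $\mathbf{G}$.

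First I would fix the fundamental domains compatibly. Choose a relatively compact fundamental domain $\widetilde{F} \subset \widetilde{X}$ for $\phi : \widetilde{X} \to X$ (for instance a finite union of lifts of the simplices of a triangulation of $X$), arranged so that $\widetilde{\phi}_j$ restricts to an a.e.\ isometry of $\widetilde{F}$ onto a fundamental domain $F_j \subset X_j$ for $\phi_j$. Writing $\{u_\alpha\}$ for a (finite) orthonormal basis of $\mathcal{H}^{p,q}(X_j)=\mathcal{H}^{p,q}_{(2)}(X_j)$ and setting $\widetilde{u}_\alpha := \widetilde{\phi}_j^{\,*} u_\alpha$, the $\mathbf{G}_j$-invariance of the trace form together with the isometry gives
$$
   h^{p,q}_{(2)}(X_j) = \int_{F_j}\mathrm{tr}\,\mathcal{K}^{p,q}_{X_j} = \int_{\widetilde{F}} \sum_\alpha \widetilde{u}_\alpha \wedge \ast\, \overline{\widetilde{u}_\alpha},
$$
where each $\widetilde{u}_\alpha$ is a smooth, $\mathbf{H}_j$-invariant, $\Delta_{\overline{\partial}}$-harmonic $(p,q)$-form on $\widetilde{X}$ (harmonicity is preserved since $\widetilde{\phi}_j$ is a local isometry).

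Next I would fix an exhaustion $\overline{\widetilde{F}} \subset \widetilde{V}_1 \subset \widetilde{V}_2 \subset \cdots$ of $\widetilde{X}$ by relatively compact open sets with $\bigcup_m \widetilde{V}_m = \widetilde{X}$. The geometric heart of the argument is the claim that for each fixed $m$ there is an $N_m$ such that, for all $j \ge N_m$, the restriction $\widetilde{\phi}_j|_{\widetilde{V}_m}$ is injective, so that $\widetilde{V}_m$ is contained in a fundamental domain $\widetilde{D}_j$ for $\widetilde{X} \to X_j$. Indeed, from $\bigcap_j \pi_1(X_j)=\pi_1(\widetilde{X})$ one gets $\bigcap_j \mathbf{H}_j = \{e\}$ inside $\mathbf{G}$; since the $\mathbf{G}$-action on $\widetilde{X}$ is properly discontinuous and $\overline{\widetilde{V}_m}$ is compact, only finitely many $g \in \mathbf{G}$ satisfy $g\,\overline{\widetilde{V}_m}\cap\overline{\widetilde{V}_m}\neq\emptyset$, and as the decreasing groups $\mathbf{H}_j$ shrink to $\{e\}$ none of the nontrivial such $g$ lies in $\mathbf{H}_j$ once $j$ is large. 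On $\widetilde{D}_j$ the forms $\{\widetilde{u}_\alpha\}$ are orthonormal (the isometry $\widetilde{D}_j \to X_j$ identifies them with $\{u_\alpha\}$), so restricting to $\widetilde{V}_m \subset \widetilde{D}_j$ makes their Gram matrix $\le I$; equivalently $\{\widetilde{u}_\alpha\}$ is a Bessel system of bound $1$ in $\mathcal{H}^{p,q}_{(2)}(\widetilde{V}_m)$.

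Because $\mathrm{tr}\,\mathcal{K}^{p,q}_{\widetilde{V}_m}$ is the trace form of the reproducing kernel, the Bessel bound yields the pointwise inequality $\sum_\alpha \widetilde{u}_\alpha\wedge\ast\overline{\widetilde{u}_\alpha} \le \mathrm{tr}\,\mathcal{K}^{p,q}_{\widetilde{V}_m}$ on $\widetilde{V}_m$ (alternatively, one first compares with $\mathrm{tr}\,\mathcal{K}^{p,q}_{\widetilde{D}_j}$ via the difference of orthogonal projections and then applies Proposition \ref{prop: properties of kernels}(a) to $\widetilde{F}\subset\widetilde{V}_m\subset\widetilde{D}_j$). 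Integrating over $\widetilde{F} \subset \widetilde{V}_m$ gives $h^{p,q}_{(2)}(X_j) \le \int_{\widetilde{F}} \mathrm{tr}\,\mathcal{K}^{p,q}_{\widetilde{V}_m}$ for all $j \ge N_m$, whence $\limsup_{j\to\infty} h^{p,q}_{(2)}(X_j) \le \int_{\widetilde{F}}\mathrm{tr}\,\mathcal{K}^{p,q}_{\widetilde{V}_m}$ for every $m$. Letting $m\to\infty$ and applying Proposition \ref{prop: properties of kernels}(b) with $U=\widetilde{F}$ gives $\int_{\widetilde{F}}\mathrm{tr}\,\mathcal{K}^{p,q}_{\widetilde{V}_m} \to \int_{\widetilde{F}}\mathrm{tr}\,\mathcal{K}^{p,q}_{\widetilde{X}} = h^{p,q}_{(2)}(\widetilde{X})$, which is the desired bound. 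I expect the injectivity/embedding step to be the main obstacle, since it is exactly where the convergence hypothesis $\bigcap_j \pi_1(X_j)=\pi_1(\widetilde{X})$ is used; once $\widetilde{V}_m$ embeds in $X_j$ for large $j$, the Bessel comparison and the passage to the limit are formal consequences of the reproducing-kernel formalism and Proposition \ref{prop: properties of kernels}.
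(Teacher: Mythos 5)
Your proof is correct and follows essentially the same route as the paper's: write $h^{p,q}_{(2)}(X_j)$ as an integral of pulled-back data over the fixed fundamental domain $\widetilde{F}$, compare it with $\int_{\widetilde{F}}{\rm tr}\mathcal{K}^{p,q}$ of an exhausting sequence of open sets via the monotonicity in Proposition \ref{prop: properties of kernels} (a), and let the exhaustion tend to $\widetilde{X}$ using Proposition \ref{prop: properties of kernels} (b). The only real difference is one of detail: where the paper simply asserts that each exhausting set is eventually contained in an (increasing) fundamental domain of $\widetilde{\phi}_j:\widetilde{X}\rightarrow X_j$, you justify this via proper discontinuity of the $\mathbf{G}$-action and $\bigcap_j \mathbf{H}_j=\{e\}$, and you replace the paper's direct identification $h^{p,q}_{(2)}(X_j)=\int_{\widetilde{F}}{\rm tr}\mathcal{K}^{p,q}_{\widetilde{F}_j}$ by an equivalent Bessel-system bound --- both are refinements of, not departures from, the paper's argument.
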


\begin{proof}
    Let $\widetilde{F}$ be a fundamental domain of the covering $\phi:\widetilde{X}\rightarrow X$, and $\{\widetilde{F_j}\}_{j=1}$ be an increasing sequence of fundamental domains of the coverings $\{\widetilde{\phi_j}:\widetilde{X}\rightarrow X_j\}_{j=1}$.
    Let $U_1\subset U_2\subset\cdots\subset \widetilde{X}$ be a sequence of open subsets containing $\widetilde{F}$ satisfying $\widetilde{X}=\bigcup_mU_m$.
    Note that each $U_m$ is contained in all but finitely many of $\widetilde{F_j}$.
    
    By Proposition \ref{prop: properties of kernels} (a),
    $$       
    \limsup\limits_{j\rightarrow\infty} \int_{\widetilde{F}} {\rm tr}\mathcal{K}^{p,q}_{\widetilde{F_j}}\leq
    \int_{\widetilde{F}} {\rm tr}\mathcal{K}^{p,q}_{U_m}.
    $$
    Let $F_j$ be a fundamental domain of the covering $\phi_j:X_j\rightarrow X$.
    Then we have
    $$
    \int_{\widetilde{F}} {\rm tr}\mathcal{K}^{p,q}_{\widetilde{F_j}}
    =\int_{F_j} {\rm tr}\mathcal{K}^{p,q}_{X_j}
    ={\rm dim}_{\mathbf{G}_j}(\mathcal{H}^{p,q}_{(2)}(X_j))
    =h^{p,q}_{(2)}(X_j).
    $$
    On the other hand, Proposition \ref{prop: properties of kernels} (b) implies that
    $$
    \lim_{m\rightarrow\infty}\int_{\widetilde{F}} {\rm tr}\mathcal{K}^{p,q}_{U_m}
    =\int_{\widetilde{F}} {\rm tr}\mathcal{K}^{p,q}_{\widetilde{X}}
    ={\rm dim}_\mathbf{G}(\mathcal{H}^{p,q}_{(2)}(\widetilde{X}))
    =h^{p,q}_{(2)}(\widetilde{X}).
    $$
\end{proof}


\begin{proof}[Proof of Theorem \ref{thm B}]
From Theorem \ref{thm: top Kazhdan eq} and \ref{thm: L2 hodge-de rham}, we have
$$
    \lim_{j \rightarrow \infty}{\rm dim}_{\mathbf{G}_j} (\mathcal{H}^k_{(2)}(X_j)) = 
    {\rm dim}_{\mathbf{G}} (\mathcal{H}^k_{(2)}(\widetilde{X})).   
$$
Now, the two decompositions 
$$
    \mathcal{H}^{k}_{(2)}(X_j) = \bigoplus_{p+q=k}\mathcal{H}^{p,q}_{(2)}(X_j), \quad
    \mathcal{H}^{k}_{(2)}(\widetilde{X}) = \bigoplus_{p+q=k}\mathcal{H}^{p,q}_{(2)}(\widetilde{X}),
$$
and Remark 4.2 (iv) in \cite{BaikShokriehWu20} imply that
\begin{equation} \label{Kazhdan2}
 \sum\limits_{p+q=k} h^{p,q}_{(2)}(X_j) =
 \sum\limits_{p+q=k}{\rm dim}_\mathbf{G_j}(\mathcal{H}^{p,q}_{(2)}(X_j)) \rightarrow
\sum\limits_{p+q=k}{\rm dim}_\mathbf{G}(\mathcal{H}^{p,q}_{(2)}(\widetilde{X})) = \sum\limits_{p+q=k} h^{p,q}_{(2)}(\widetilde{X}),
\end{equation}
as $j \rightarrow \infty$. 
Therefore, (\ref{Kazhdan2}) and Proposition \ref{prop: Kazhdan inequality} yield that
$$
\limsup\limits_{j\rightarrow\infty}h^{p,q}_{(2)}(X_j)=
h^{p,q}_{(2)}(\widetilde{X}).
$$
Moreover, the elementary properties of $\liminf$ and $\limsup$:
$$
\limsup\limits_{j\rightarrow\infty}(a_j+b_j)\leq\limsup\limits_{j\rightarrow\infty}(a_j)+\limsup\limits_{j\rightarrow\infty}(b_j),
$$
$$
\liminf\limits_{j\rightarrow\infty}(a_j+b_j)\leq \liminf\limits_{j\rightarrow\infty}(a_j)+\limsup\limits_{j\rightarrow\infty}(b_j)\leq\limsup\limits_{j\rightarrow\infty}(a_j+b_j)
$$
imply that
$$
\liminf\limits_{j\rightarrow\infty}h^{p,q}_{(2)}(X_j)
=h^{p,q}_{(2)}(\widetilde{X})
=\limsup\limits_{j\rightarrow\infty}h^{p,q}_{(2)}(X_j),
$$
which completes the proof.

\end{proof}

\section{limits of the Bergman kernels and metrics} \label{sec 5}



In the previous section, we proved the convergence of $L^2$-Hodge numbers 
$ ( h^{p,q}_{(2)}(X_j) := \int_{F_j} {\rm tr}\mathcal{K}^{p,q}_{X_j} )$ on a tower of coverings $\{ \phi_j : X_j \rightarrow X \}$, where $F_j$ is a fundamental domain of $\phi_j$. In this section, we show the (local) uniform convergence of the trace forms of Schwartz kernels (of $(n,0)$ type) $  {\rm tr}\mathcal{K}^{n,0}_{X_j}$, which is also called the Bergman kernel.

For a complex manifold $X$, let $\Omega^{n,0}_{(2)}(X)$ be the space of square integrable holomorphic $(n,0)$-forms equipped with the inner product
$\inner{f,g} = i^{n^2} 2^{-n} \int_{X} f \wedge \overline{g}$, and the norm is defined by $\norm{f}^2 := \inner{f,f}. $ 
Then $\Omega^{n,0}_{(2)}(X)$ is a Hilbert space because it is a closed subspace of $A^{n,0}_{(2)}(X)$.
The {\it Bergman kernel} on $X \times X$ is defined by 
$$ \Bergman_X(z,w) := \sum_{\alpha = 0} s_{\alpha}(z) \wedge \overline{s_{\alpha}(w)} ,$$
where $\{ s_{\alpha} \}$ is an orthonormal basis for $\Omega^{n,0}_{(2)}(X)$. 
Note that since $\Omega^{(n,0)}_{(2)}(X) =  \mathcal{H}^{n,0}_{(2)}(X)$, $\Bergman_X(z,z) = {\rm tr}\mathcal{K}^{n,0}_X$ (see Definition \ref{def: Schwartz kernel of (p,q)}).

For a holomorphic local coordinate chart $(z_1, \cdots,  z_n, w_1, \cdots, w_n)$ on $X \times X$, 
denote by $dz = dz_1 \wedge \cdots \wedge dz_n$, $d\overline{z} = d\overline{z}_1 \wedge \cdots \wedge d\overline{z}_n$, $dw = dw_1 \wedge \cdots \wedge dw_n$, and $d\overline{w} = d\overline{w}_1 \wedge \cdots \wedge d\overline{w}_n$.
Then $\Bergman_X(z,w)$ can be written on the chart as 
$$ \Bergman_{X}(z,w) = \Bergman_{X}^*(z,w)  dz \wedge d\overline{w}, $$
and $\Bergman_{X}^*(z,w)$ is called the {\it Bergman kernel function}. 
When $\Bergman_X^*(z) := \Bergman_X^*(z,z) > 0$, the {\it Bergman (pseudo-)metric} on $X$ is defined by

$$ ds^2_X := \sum_{j,k=1}^n \frac{\partial^2 \log \Bergman^*_X(z)}{\partial z_j \partial \overline{z}_k } dz_j \otimes d\overline{z}_k . $$ 

\begin{rmk} \label{remark on kernel}
    \begin{enumerate}
        \item $\Bergman_X(z,w)$ is independent of the choice of the orthonormal basis.
        \item \label{well-defineness of push-forward kernel} 
            The Bergman kernel on the diagonal $\Bergman_X(z)$ is a biholomorphic invariant $(n,n)$-form on $X$.
            Hence, for a covering $\phi : Y \rightarrow Y / \mathbf{G}$ where $\mathbf{G} \subset \Aut(Y)$, the push-forward form $\phi_* \Bergman_Y(z)$ is well-defined on $Y / \mathbf{G}$.
        \item \label{reproducing property}
            For $y \in X$, fix a holomorphic local chart $(w_1, \cdots, w_n)$ near $y$. Then, by the Riesz representation theorem, 
            there exists $\Bergman^y_X \in \Omega^{n,0}_{(2)}(X)$ such that 
                \begin{equation*} 
                    f^*(y) = \inner{f, \Bergman^y_X}   
                \end{equation*}
                for all $f \in \Omega^{n,0}_{(2)}(X)$, where $f(w)=f^*(w)dw$ on the chart.
                This is called the reproducing property. If $f=\Bergman_X^y$, then
                $$ \Bergman_X^*(y,y) = \norm{\Bergman_X^y}^2 ,$$
                where $\Bergman_X(w,w) = \Bergman_X^*(w,w)dw\wedge d\overline{w}$.
                In terms of an orthonormal basis $\{ s_{\alpha} \}$, $\Bergman^y_X$ can be written as
                $$ \Bergman^y_X(z) = \sum_{\alpha=0} \overline{s_{\alpha}^*(y)}  s_{\alpha}(z)  ,$$
                where $s_{\alpha}(w) = s_{\alpha}^*(w)dw$. 
        \item For a fixed point $x \in X$, $\Bergman_X(x)$ satisfies the following extremal property:
            $$ \Bergman_X(x) = \max \{f(x) \wedge \overline{f(x)} \:  | \:  f \in \Omega^{n,0}_{(2)}(X), \norm{f}=1 \} .$$
        \item $\Bergman_{X}^*(z,w)$ is holomorphic in $z$ variables and anti-holomorphic in $w$ variables. 
    \end{enumerate}
\end{rmk}

\begin{rmk} \label{remark on metric}
    \begin{enumerate}
        \item Although $\Bergman_X^*(z)$ depends on a coordinate chart, $ds^2_X$ is independent of the choice of the chart.
        \item \label{well-defineness of push-forward metric} 
            The Bergman metric is invariant under biholomorphisms, i.e., $ds^2_Y = F^* ds^2_X$ for a biholomorphic map $F : Y \rightarrow X$. 
            Hence, for a covering $\phi : Y \rightarrow Y / \mathbf{G}$ where $\mathbf{G} \subset \Aut(Y)$, the push-forward metric $\phi_* ds^2_Y$ is well-defined on $Y / \mathbf{G}$.
    \end{enumerate}
\end{rmk}

Let $\{\phi_j : X_j \rightarrow X\}$ be a tower of coverings of a compact complex manifold $X$ converging to an infinite Galois covering $\phi : \widetilde{X} \rightarrow X$, and $\{\widetilde{\phi}_j : \widetilde{X} \rightarrow X_j\}$ be a sequence of coverings such that $\phi = \phi_j \circ \widetilde{\phi}_j$.

\begin{prop} \label{prop: uniform convergence of Bergman kernels}
    Assume that
	\begin{equation} \label{int converges}
		\int_{X} \phi_{j*} \Bergman_{X_j}(w,w) \rightarrow \int_{X} \phi_* \Bergman_{\widetilde{X}}(w,w).
	\end{equation}
    Then
	\begin{equation}
	    \widetilde{\phi}_j^* \Bergman_{X_j}(z,w) \rightarrow \Bergman_{\widetilde{X}}(z,w) \text{ locally in } C^{\infty} \text{-topology on } \widetilde{X} \times \widetilde{X}.
	\end{equation}
\end{prop}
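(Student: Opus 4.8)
The plan is to deduce the two-variable $C^{\infty}_{loc}$ convergence from the scalar integral hypothesis (\ref{int converges}) by a normal-families argument, using the integral equality only to pin down the limit along the diagonal. Throughout write $\mathcal{B}_j:=\widetilde{\phi}_j^*\Bergman_{X_j}$ and let $\mathcal{B}_j^*$, $\Bergman_{\widetilde{X}}^*$ denote the corresponding Bergman kernel functions in a fixed atlas of $\widetilde{X}$ pulled back from $X$. Since $\Bergman_{X_j}$ and $\Bergman_{\widetilde{X}}$ are biholomorphic invariants (Remark \ref{remark on kernel}) and every deck transformation of $\phi:\widetilde{X}\to X$ descends to one of $\phi_j:X_j\to X$, both $\mathcal{B}_j$ and $\Bergman_{\widetilde{X}}$ are $\mathbf{G}$-invariant, so it suffices to prove local uniform convergence over a relatively compact neighborhood of a fundamental domain $\widetilde{F}$ of $\phi$. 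Note that (\ref{int converges}) is exactly $\int_{\widetilde{F}}\mathcal{B}_j^*(z,z)\,dV\to\int_{\widetilde{F}}\Bergman_{\widetilde{X}}^*(z,z)\,dV$, because $\widetilde{\phi}_j(\widetilde{F})$ is a fundamental domain of $\phi_j$ and $\widetilde{\phi}_j$ is biholomorphic there, giving $\int_X\phi_{j*}\Bergman_{X_j}=\int_{\widetilde{F}}\widetilde{\phi}_j^*\Bergman_{X_j}$, and likewise $\int_X\phi_*\Bergman_{\widetilde{X}}=\int_{\widetilde{F}}\Bergman_{\widetilde{X}}$.

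First I would establish a one-sided diagonal bound together with local uniform bounds. Let $\{\widetilde{F}_j\}$ be an increasing sequence of fundamental domains of $\widetilde{\phi}_j:\widetilde{X}\to X_j$ exhausting $\widetilde{X}$. Each $g\in\Omega^{n,0}_{(2)}(X_j)$ pulls back to a holomorphic $(n,0)$-form $\widetilde{\phi}_j^*g$ on $\widetilde{X}$, invariant under the deck group of $\widetilde{\phi}_j$, with $\norm{\widetilde{\phi}_j^*g}_{L^2(\widetilde{F}_j)}=\norm{g}_{X_j}$; thus restriction to $\widetilde{F}_j$ embeds the unit ball of $\Omega^{n,0}_{(2)}(X_j)$ into that of $\Omega^{n,0}_{(2)}(\widetilde{F}_j)$. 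The extremal characterization in Remark \ref{remark on kernel} then yields $\mathcal{B}_j^*(z,z)\le\Bergman_{\widetilde{F}_j}^*(z,z)$ for $z\in\widetilde{F}_j$. By monotonicity of Bergman kernels under increasing open sets, $\Bergman_{\widetilde{F}_j}^*$ is bounded above on any compact set by the kernel of a fixed relatively compact subdomain, uniformly in $j$ (which gives local uniform bounds for $\mathcal{B}_j^*$ on the diagonal), while Ramadanov's theorem (the pointwise form of Proposition \ref{prop: properties of kernels}(b)) gives $\Bergman_{\widetilde{F}_j}^*(z,z)\to\Bergman_{\widetilde{X}}^*(z,z)$. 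Hence $\limsup_j\mathcal{B}_j^*(z,z)\le\Bergman_{\widetilde{X}}^*(z,z)$ pointwise.

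Next I would pass to the two-variable kernels. By Cauchy--Schwarz, $|\mathcal{B}_j^*(z,w)|^2\le\mathcal{B}_j^*(z,z)\,\mathcal{B}_j^*(w,w)$, so the diagonal bounds make $\{\mathcal{B}_j^*\}$ locally uniformly bounded on $\widetilde{X}\times\widetilde{X}$; since each $\mathcal{B}_j^*$ is holomorphic in $z$ and anti-holomorphic in $w$ (Remark \ref{remark on kernel}), Montel's theorem gives precompactness in $C^{\infty}_{loc}$. Let $L(z,w)$ be any $C^{\infty}_{loc}$-subsequential limit; it is again holomorphic in $z$ and anti-holomorphic in $w$. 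Its diagonal $\ell(z):=L(z,z)$ is continuous, satisfies $\ell\le\Bergman_{\widetilde{X}}^*$ by the $\limsup$ bound, and, since the convergence is uniform on the relatively compact set $\overline{\widetilde{F}}$, satisfies $\int_{\widetilde{F}}\ell\,dV=\lim_k\int_{\widetilde{F}}\mathcal{B}_{j_k}^*(z,z)\,dV=\int_{\widetilde{F}}\Bergman_{\widetilde{X}}^*(z,z)\,dV$ by (\ref{int converges}). A nonnegative continuous integrand with zero integral vanishes, so $\ell\equiv\Bergman_{\widetilde{X}}^*$ on $\widetilde{F}$, hence on $\widetilde{X}$ by $\mathbf{G}$-invariance. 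Finally $L$ and $\Bergman_{\widetilde{X}}^*$ are both holomorphic in $z$, anti-holomorphic in $w$, and agree on the diagonal, so by polarization they coincide. Every subsequential limit thus equals $\Bergman_{\widetilde{X}}^*$, and precompactness forces $\mathcal{B}_j^*\to\Bergman_{\widetilde{X}}^*$ in $C^{\infty}_{loc}$, which is the assertion.

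The main obstacle is the one-sided diagonal estimate of the second step: identifying $\Omega^{n,0}_{(2)}(X_j)$ with the $\widetilde{\phi}_j$-invariant $L^2$-holomorphic forms on the fundamental domain $\widetilde{F}_j$ and comparing with the genuine Bergman kernel of the open set $\widetilde{F}_j$, together with the Ramadanov-type convergence $\Bergman_{\widetilde{F}_j}^*\to\Bergman_{\widetilde{X}}^*$ for the possibly irregular exhausting domains $\widetilde{F}_j$. Once this upper bound and the attendant uniform bounds are in place, the conceptual point is that a single one-sided inequality combined with the integral equality (\ref{int converges}) already forces full convergence via a squeeze, after which the off-diagonal statement and $C^{\infty}$ smoothness follow routinely from normality and polarization.
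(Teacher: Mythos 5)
Your proof is correct, but it takes a genuinely different route from the paper's. The paper argues quantitatively and without subsequences: writing the kernels in charts as functions holomorphic in $z$ and anti-holomorphic in $w$, it applies Cauchy estimates to bound all derivative differences by $L^2$-integrals over polydiscs, splits $\Bergman_{\widetilde{X}} - \widetilde{\phi}_j^*\Bergman_{X_j}$ by the triangle inequality through the intermediate kernel $\Bergman_{\widetilde{F}_j}$ of the fundamental domain, and then computes the two resulting $L^2$-norms \emph{exactly} via the reproducing property (Remark \ref{remark on kernel}), reducing everything to the diagonal integrals $\int_{\widetilde{F}}\bigl(\Bergman_{\widetilde{F}_j}-\Bergman_{\widetilde{X}}\bigr)$ and $\int_{\widetilde{F}}\bigl(\Bergman_{\widetilde{X}}-\widetilde{\phi}_j^*\Bergman_{X_j}\bigr)$, which tend to zero by Proposition \ref{prop: properties of kernels}(b) and by (\ref{int converges}); the resulting bounds are independent of the base points. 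You instead run a soft normal-families argument: the extremal property gives the one-sided bound $(\widetilde{\phi}_j^*\Bergman_{X_j})^*\le\Bergman^*_{\widetilde{F}_j}$ on the diagonal, pointwise Ramadanov convergence gives $\limsup_j(\widetilde{\phi}_j^*\Bergman_{X_j})^*\le\Bergman^*_{\widetilde{X}}$, Cauchy--Schwarz plus Montel give $C^\infty_{loc}$-precompactness, the integral hypothesis pins down the diagonal of every subsequential limit by a squeeze, and polarization (sesqui-holomorphy plus agreement on the maximally totally real diagonal of the connected manifold $\widetilde{X}\times\widetilde{X}$) identifies the full limit. Both proofs share the same two inputs — the kernels of the exhausting fundamental domains $\widetilde{F}_j$ and a Ramadanov-type statement — but your mechanism (compactness plus uniqueness of limits) trades the paper's explicit error control for a more elementary argument needing only a one-sided inequality rather than exact norm identities. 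Two points to tighten: your opening reduction, that it suffices to prove convergence near a fundamental domain, is not literally valid for the two-variable statement, since $\mathbf{G}$-invariance of the kernels is only diagonal (it relates $(z,w)$ to $(gz,gw)$, so off-diagonal pairs $\widetilde{F}\times g\widetilde{F}$ are not covered); this is harmless because your actual argument (uniform bounds, Montel, polarization) is global, but the sentence should be dropped or rephrased. Also, you invoke the \emph{pointwise} form of Ramadanov's theorem for the exhaustion $\widetilde{F}_j\nearrow\widetilde{X}$; this is classical, but it is strictly stronger than the integrated statement of Proposition \ref{prop: properties of kernels}(b), so cite Ramadanov's theorem itself rather than calling it a form of that proposition.
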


\begin{proof}
    For $x,y \in \widetilde{X}$, choose two holomorphic local charts $z : U_{x} \rightarrow P(0,r) \subset \CC^n$ and $w : U_{y} \rightarrow P(0,r) \subset \CC^n$, where $U_x, U_y$ are open neighborhoods of $x$ and $y$ in $\widetilde{X}$, respectively, and $P(0,r)$ is the polydisc of radius $r$ centered at $0$. 
    Then the Bergman kernels of $\Bergman_{\widetilde{X}}$ and $\widetilde{\phi}_j^* \Bergman_{X_j}$ on that charts are given by
    \begin{align*}
        \Bergman_{\widetilde{X}}(z, w) &= \Bergman_{\widetilde{X}}^*(z, w)  dz \wedge d\overline{w}, \\
        \widetilde{\phi}_j^* \Bergman_{X_j}(z, w) &= (\widetilde{\phi}_j^* \Bergman_{X_j})^*(z, w)  dz \wedge d\overline{w}.
    \end{align*}
    Since $\Bergman_{\widetilde{X}}^*(z, w)$ and $(\widetilde{\phi}_j^* \Bergman_{X_j})^*(z, w)$ are holomorphic in $z$ and anti-holomorphic in $w$, we may apply the Cauchy estimate in both variables. Then, for $\alpha, \beta \in \NN^n$,
    \begin{align*}
        & | \partial^{\overline{\beta}} \partial^{\alpha} \Bergman_{\widetilde{X}}^*(x,y) - \partial^{\overline{\beta}} \partial^{\alpha} (\widetilde{\phi}_j^* \Bergman_{X_j})^*(x,y) |^2 \\
        & \le C' \int_{P(0,r/2)} | \partial^{\alpha} \Bergman_{\widetilde{X}}^*(x, w) - \partial^{\alpha} (\widetilde{\phi}_j^* \Bergman_{X_j})^*(x, w) |^2   dwd\overline{w} \\  
        & \le C \int_{P(0,r/2)} \int_{P(0,r/2)} |\Bergman_{\widetilde{X}}^*(z, w) -  (\widetilde{\phi}_j^* \Bergman_{X_j})^*(z, w)|^2 dzd\overline{z} dwd\overline{w} \\
        & \le C \int_{P(0,r/2)} \norm{ \Bergman_{\widetilde{X}}^w - \Bergman_{\widetilde{F}_{j}}^w }_{\widetilde{F}_{j}}^2 
        + \norm{ \Bergman_{\widetilde{F}_{j}}^w - \widetilde{\phi}_j^* \Bergman_{X_j}^w }_{\widetilde{F}_{j}}^2  dwd\overline{w},
    \end{align*}
    where $C$ depends only on $\alpha, \beta$ and $r$, $\widetilde{F}_j$ is a fundamental domain of $\widetilde{\phi}_j : \widetilde{X} \rightarrow X_j$ satisfying $U_x, U_y \subset \widetilde{F}_j$ (by increasing $j$ if necessary), and $\Bergman_{\widetilde{F}_{j}}^w = \Bergman_{\widetilde{F}_{j}}( \cdot, w)$ is the Bergman kernel of $\widetilde{F}_{j}$.
    Now, for $w \in \widetilde{F}_{j}$,
    \begin{align*}
        & \norm{ \Bergman_{\widetilde{X}}^w - \Bergman_{\widetilde{F}_{j}}^w}_{\widetilde{F}_{j}}^2 \\
        & = \norm{\Bergman_{\widetilde{X}_{\phantom{j}}}^w}_{\widetilde{F}_{j}}^2 + \norm{\Bergman_{\widetilde{F}_{j}}^w}_{\widetilde{F}_{j}}^2 
        - 2\re \inner{\Bergman_{\widetilde{X}}^w, \Bergman_{\widetilde{F}_{j}}^w}_{\widetilde{F}_{j}} \\
        & \le \norm{\Bergman_{\widetilde{X}_{\phantom{j}}}^w}_{\widetilde{X}}^2 + \norm{\Bergman_{\widetilde{F}_{j}}^w}_{\widetilde{F}_{j}}^2 
        - 2\re \inner{\Bergman_{\widetilde{X}}^w, \Bergman_{\widetilde{F}_{j}}^w}_{\widetilde{F}_{j}} \\
        & = \Bergman_{\widetilde{X}}^*(w,w) + \Bergman_{\widetilde{F}_{j}}^*(w,w) - 2 \Bergman_{\widetilde{X}}^*(w,w),    
        &(\text{Remark } \ref{remark on kernel} \: (\ref{reproducing property}))    
    \end{align*}
    and similarly
    \begin{align*}
        & \norm{ \Bergman_{\widetilde{F}_{j}}^w - \widetilde{\phi}_j^*\Bergman_{X_{j}}^w }_{\widetilde{F}_{j}}^2 \\
        & = \norm{\Bergman_{\widetilde{F}_{j}}^w}_{\widetilde{F}_{j}}^2 + \norm{\widetilde{\phi}_j^*\Bergman_{X_{j}}^w}_{\widetilde{F}_{j}}^2 
        - 2\re \inner{ \Bergman_{\widetilde{F}_{j}}^w, \widetilde{\phi}_j^*\Bergman_{X_{j}}^w }_{\widetilde{F}_{j}}  \\
        & = \Bergman_{\widetilde{F}_{j}}^*(w,w) + (\widetilde{\phi}_j^*\Bergman_{X_{j}})^*(w,w) 
        - 2 (\widetilde{\phi}_j^*\Bergman_{X_{j}})^*(w,w).
        &(\text{Remark } \ref{remark on kernel} \: (\ref{reproducing property}))    
    \end{align*}
    Here, in the last equality, we used 
    $\norm{\widetilde{\phi}_j^*\Bergman_{X_{j}}^w}_{\widetilde{F}_{j}}^2 = \norm{\Bergman_{X_{j}}^w}_{\widetilde{\phi}_j(\widetilde{F}_j)}^2 = \norm{\Bergman_{X_{j}}^w}_{X_{j}}^2 $.
    Let $\widetilde{F}^y$ be a fundamental domain of $\phi : \widetilde{X} \rightarrow X$ containing $U_y$. Then altogether,
    \begin{align*}
        & | \partial^{\overline{\beta}} \partial^{\alpha} \Bergman_{\widetilde{X}}^*(x,y) - \partial^{\overline{\beta}} \partial^{\alpha} (\widetilde{\phi}_j^*\Bergman_{X_j})^*(x,y) |^2 \\
        & \le 2 C \int_{\widetilde{F}^y} \left( \Bergman_{\widetilde{F}_{j}}(w,w) - \Bergman_{\widetilde{X}}(w,w) \right)
        + C \int_{\widetilde{F}^y} \left(\Bergman_{\widetilde{X}}(w,w) - \widetilde{\phi}_j^*\Bergman_{X_{j}}(w,w) \right) .
    \end{align*}
    Note that all two terms on the right-hand side do not depend on the choice of initial points $x,y \in \widetilde{X}$, and converge to $0$ as $j \rightarrow \infty$ because of 
    Proposition \ref{prop: properties of kernels} (b), the assumption, and
    the fact that 
    $\int_{\widetilde{F}^y} \left(\Bergman_{\widetilde{X}}(w,w) - \widetilde{\phi}_j^* \Bergman_j(w,w)   \right) 
    = \int_{X} \left(\phi_* \Bergman_{\widetilde{X}}(w,w) - \phi_{j*} \Bergman_{X_{j}}(w,w)   \right)$.
    This completes the proof.
\end{proof}

\begin{thm} \label{thm: Bergman stable}
    Let $\{ \phi_j : X_j \rightarrow X \}$ be a tower of coverings of a compact K\"ahler manifold $X$ converging to an infinite Galois covering 
    $\{ \phi : \widetilde{X} \rightarrow X \}$. Then the sequence of Bergman kernels $\widetilde{\phi}_j^* \Bergman_{X_j}(z,w)$ converges to $\Bergman_{\widetilde{X}}(z, w)$ locally in $C^{\infty}$-topology on $\widetilde{X} \times \widetilde{X}$.
    Moreover, with an additional condition that $\widetilde{X}$ admits the Bergman metric, the sequence of the Bergman metrics $\widetilde{\phi}_j^* ds^2_{X_j}$ converges to $ ds^2_{\widetilde{X}}$ locally in $C^{\infty}$-topology on $\widetilde{X}$.
\end{thm}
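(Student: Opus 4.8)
The plan is to deduce the first assertion directly from Proposition \ref{prop: uniform convergence of Bergman kernels} by verifying its hypothesis (\ref{int converges}), and then to upgrade the locally $C^\infty$ convergence of the Bergman kernels to that of the Bergman metrics by a routine argument passing through the logarithm. The genuine mathematical content sits in the earlier results (Theorem \ref{thm B} and Proposition \ref{prop: uniform convergence of Bergman kernels}); here the work is to assemble them.

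First I would check condition (\ref{int converges}). Recall that $\Bergman_{X}(w,w) = {\rm tr}\mathcal{K}^{n,0}_{X}$, so for a fundamental domain $F_j$ of $\phi_j$ one has
$$
\int_X \phi_{j*}\Bergman_{X_j}(w,w) = \int_{F_j} {\rm tr}\mathcal{K}^{n,0}_{X_j} = {\rm dim}_{\mathbf{G}_j}(\mathcal{H}^{n,0}_{(2)}(X_j)) = h^{n,0}_{(2)}(X_j),
$$
and likewise $\int_X \phi_{*}\Bergman_{\widetilde{X}}(w,w) = h^{n,0}_{(2)}(\widetilde{X})$. Thus (\ref{int converges}) is precisely the statement $h^{n,0}_{(2)}(X_j) \to h^{n,0}_{(2)}(\widetilde{X})$, which is the case $(p,q)=(n,0)$ of Theorem \ref{thm B}. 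With (\ref{int converges}) in hand, Proposition \ref{prop: uniform convergence of Bergman kernels} immediately gives the locally $C^\infty$ convergence $\widetilde{\phi}_j^* \Bergman_{X_j}(z,w) \to \Bergman_{\widetilde{X}}(z,w)$ on $\widetilde{X}\times\widetilde{X}$, which is the first assertion.

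For the second assertion, fix a point $p \in \widetilde{X}$ and a holomorphic chart around it; since $\widetilde{\phi}_j$ is a covering map, hence a local biholomorphism, it carries this chart to a chart on $X_j$. Restricting the first assertion to the diagonal yields $(\widetilde{\phi}_j^* \Bergman_{X_j})^*(z) \to \Bergman^*_{\widetilde{X}}(z)$ locally in $C^\infty$. Since $\widetilde{X}$ admits the Bergman metric, $\Bergman^*_{\widetilde{X}} > 0$; hence on a compact neighborhood $K$ of $p$ we have $\Bergman^*_{\widetilde{X}} \ge c$ for some $c > 0$, and by the $C^0$ part of the convergence $(\widetilde{\phi}_j^* \Bergman_{X_j})^* \ge c/2$ on $K$ for all large $j$. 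In particular $\Bergman^*_{X_j} > 0$ on $\widetilde{\phi}_j(K)$ and $\widetilde{\phi}_j^* ds^2_{X_j}$ is defined on $K$ for such $j$.

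Finally I would transfer the convergence through the logarithm. Because $\log$ is smooth on $(0,\infty)$ and both $(\widetilde{\phi}_j^* \Bergman_{X_j})^*$ and $\Bergman^*_{\widetilde{X}}$ take values in $[c/2,\infty)$ on $K$, the locally $C^\infty$ convergence of the kernel functions passes through composition with $\log$ to give $\log(\widetilde{\phi}_j^* \Bergman_{X_j})^* \to \log \Bergman^*_{\widetilde{X}}$ in $C^\infty$ on $K$. Applying the mixed derivatives $\partial^2/\partial z_j \partial \overline{z}_k$, which preserves local $C^\infty$ convergence, the coefficients of $\widetilde{\phi}_j^* ds^2_{X_j}$ converge to those of $ds^2_{\widetilde{X}}$ in $C^\infty$ on $K$; here one uses that $ds^2$ is a biholomorphic invariant (Remark \ref{remark on metric}), so that under a holomorphic change of chart $\log\Bergman^*$ changes only by the pluriharmonic term $\log|\det J|^2$, which is annihilated by $\partial\overline{\partial}$, and hence in the chart the pulled-back metric is genuinely $\partial\overline{\partial}\log$ of the pulled-back kernel function. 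Since $p$ is arbitrary, this yields locally $C^\infty$ convergence on all of $\widetilde{X}$. I expect the only delicate point to be maintaining the positivity needed to define the logarithm uniformly on compacta, which is exactly where the hypothesis that $\widetilde{X}$ admits the Bergman metric is used.
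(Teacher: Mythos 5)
Your proposal is correct and follows essentially the same route as the paper: verify hypothesis (\ref{int converges}) via the identification $\Omega^{n,0}_{(2)} = \mathcal{H}^{n,0}_{(2)}$ and the $(p,q)=(n,0)$ case of Theorem \ref{thm B}, invoke Proposition \ref{prop: uniform convergence of Bergman kernels}, and then pass to the metric through the local formula for $\partial\overline{\partial}\log\Bergman^*$. Your treatment of the positivity of $\Bergman^*_{\widetilde{X}}$ on compacta is simply a more explicit writing-out of the step the paper leaves implicit when it cites the quotient-rule expression.
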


\begin{rmk}
    By Corollary \ref{thm C}, $ds^2_{\widetilde{X}} > 0$ implies that $ds^2_{X_j} > 0$ for all $j \ge N$ for some $N \in \NN$. 
\end{rmk}

\begin{proof}
    First note that, for any $n$-dimensional complex manifold $Y$, $\mathcal{H}^{n,0}_{(2)}(Y) = \Omega^{n,0}_{(2)}(Y) $ because $\Delta_{\overline{{\partial}}} u=0$ if and only if $\overline{\partial} u = 0$ and $\overline{\partial}^* u = 0$.
    Hence Theorem \ref{thm B} implies that $\int_{X} \phi_{j*} \Bergman_{X_j}(w,w) \rightarrow \int_{X} \phi_* \Bergman_{\widetilde{X}}(w,w)$, which shows the convergence of the Bergman kernels by Proposition \ref{prop: uniform convergence of Bergman kernels}. 
    The convergence of the Bergman metrics follows from the local expression 
    $$ 
    \partial\overline{\partial} \log \Bergman^* = \frac{\partial\overline{\partial} \Bergman^*}{\Bergman^*} - \frac{(\partial\Bergman^*) (\overline{\partial}\Bergman^*)}{(\Bergman^*)^2}
    $$
    and the convergence of the Bergman kernels.
\end{proof}


\section{Immersion into a projective space} \label{sec 6}

Let $K_X$ be the canonical line bundle over a complex manifold $X$ (not necessarily compact).
Let $H^0_{(2)}(X, K_X)$ be the Hilbert space of all holomorphic $L^2$-sections of $K_X$, i.e.,  $H^0_{(2)}(X, K_X)= \Omega_{(2)}^{n,0}(X)$,
and $\PP( H^0_{(2)}(X, K_X)^*)$ be the projectivization of its dual space $H^0_{(2)}(X, K_X)^*$.
Then there exists a canonical map $\iota_X : X \rightarrow \PP( H^0_{(2)}(X, K_X)^*) \cong \CC\PP^N (N \le \infty)$ defined by $x \mapsto \left[ s_0(x), \cdots, s_N(x) \right]$, where $\{s_{\alpha}\}_{\alpha=0}$ is a basis for $H^0_{(2)}(X, K_X)$.
We say that $K_X$ is {\it very ample} if the map $\iota_X$ is an embedding.

In this section, we consider the following immersion problem; 
if a canonical map $\widetilde{X} \rightarrow \PP(H^0_{(2)}(\widetilde{X}, K_{\widetilde{X}})^*)$ is an immersion, then a canonical map $X_j \rightarrow \PP(H^0(X_j, K_{X_j})^*)$ is also an immersion for sufficiently large $j$.
Note that whether a canonical map is an immersion or not, as well as very ampleness, is independent of the choice of the basis,
hence one may assume $\{s_{\alpha}\}_{\alpha=0}$ is an orthonormal basis.
To show that $\iota_X$ is an immersion, we need to show 
$K_X$ is base-point free (i.e., for all $x \in X$ there exists a holomorphic section $s \in H^0_{(2)}(X, K_X)$ such that $s(x) \neq 0$), 
and $d\iota_X$ is injective.
The following two lemmas are based on Kobayashi (see Theorem 8.1 and 8.2 in \cite{Kobayashi59}).

\begin{lem} \label{lem: base-point free}
    $K_X$ is base-point free if and only if 
    the Bergman kernel $\Bergman_X(x,x) > 0$ for all $x \in X$.
\end{lem}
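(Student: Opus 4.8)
The plan is to establish both directions of the equivalence directly from the definitions, using the pointwise formula for the Bergman kernel on the diagonal together with the reproducing property recorded in Remark \ref{remark on kernel}. The key observation is that, in a holomorphic coordinate chart near a point $x \in X$, writing each orthonormal basis element as $s_\alpha(w) = s_\alpha^*(w)\,dw$, the diagonal value $\Bergman_X^*(x,x) = \sum_\alpha |s_\alpha^*(x)|^2$ is manifestly a sum of nonnegative terms, so that $\Bergman_X(x,x) > 0$ is equivalent to the existence of at least one index $\alpha$ with $s_\alpha^*(x) \neq 0$.

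For the forward direction, suppose $K_X$ is base-point free, so for each $x \in X$ there is a section $s \in \Omega^{n,0}_{(2)}(X)$ with $s(x) \neq 0$. Expanding $s$ in the orthonormal basis $\{s_\alpha\}$ forces $s_\alpha^*(x) \neq 0$ for some $\alpha$, whence $\Bergman_X^*(x,x) = \sum_\alpha |s_\alpha^*(x)|^2 > 0$. Since the $(n,n)$-form $\Bergman_X(x,x) = \Bergman_X^*(x,x)\,dz\wedge d\overline{z}$ is positive precisely when its coefficient is positive, this gives $\Bergman_X(x,x) > 0$ for all $x$. Conversely, if $\Bergman_X(x,x) > 0$, then the same sum formula shows some $s_\alpha^*(x) \neq 0$, so the basis element $s_\alpha$ itself is a holomorphic $L^2$-section not vanishing at $x$, witnessing base-point freeness. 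One can also phrase the converse more invariantly via the reproducing section $\Bergman_X^x$ of Remark \ref{remark on kernel} (\ref{reproducing property}): the identity $\Bergman_X^*(x,x) = \norm{\Bergman_X^x}^2$ shows that $\Bergman_X(x,x) > 0$ forces $\Bergman_X^x \neq 0$, and the reproducing property $f^*(x) = \inner{f, \Bergman_X^x}$ then guarantees that $\Bergman_X^x$ does not vanish at $x$, again exhibiting a nonvanishing section.

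I do not expect any serious obstacle here; the statement is essentially a translation between two equivalent ways of encoding the same nonvanishing condition, and the entire argument rests on the elementary fact that a finite or convergent sum of nonnegative real numbers is positive if and only if one of its terms is. The only point requiring a little care is the coordinate-dependence: the \emph{value} $\Bergman_X^*(x,x)$ depends on the chart, but its positivity does not, since a change of coordinates multiplies $\Bergman_X^*$ by $|\det J|^2$ for the holomorphic Jacobian $J$, a strictly positive factor. Thus the condition $\Bergman_X(x,x) > 0$ is well-defined independently of the chart, consistent with the biholomorphic invariance of the diagonal form noted in Remark \ref{remark on kernel} (\ref{well-defineness of push-forward kernel}), and the equivalence with base-point freeness is likewise intrinsic.
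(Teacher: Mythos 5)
Your proof is correct and follows essentially the same route as the paper, whose entire argument is the identity $\Bergman_X(x,x) = \sum_{\alpha} s_{\alpha}(x) \wedge \overline{s_{\alpha}(x)}$; you have simply spelled out the two directions and the chart-independence that the paper leaves implicit.
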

\begin{proof}
    It easily follows from the fact $ \Bergman_X(x,x) = \sum_{\alpha=0}^{N} s_{\alpha}(x) \overline{s_{\alpha}(x)} $.
\end{proof}

\begin{lem} \label{lem: immersion}
    Assume $K_X$ is base-point free and $\{s_{\alpha}\}_{\alpha=0}$ is an orthonormal basis. Then
    $d\iota_X$ is injective if and only if
    $X$ admits the Bergman metric (i.e., $ds^2_X = \IM \partial \overline{\partial} \log \Bergman_X^* > 0$).
\end{lem}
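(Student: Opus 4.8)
The plan is to identify the Bergman metric $ds^2_X$ with the pullback under $\iota_X$ of the Fubini--Study form on $\PP(H^0_{(2)}(X,K_X)^*)$, and then to invoke the elementary fact that a holomorphic map into a Kähler manifold with positive definite metric form is an immersion at a point exactly where the pulled-back form is strictly positive. Throughout I may assume $\{s_\alpha\}$ is orthonormal, since by Remark \ref{remark on kernel} (1) this changes neither $\Bergman_X$ nor $\iota_X$ (up to a unitary automorphism of the target).

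First I would fix $x \in X$ and, using base-point freeness (Lemma \ref{lem: base-point free}), unitarily rotate the orthonormal basis so that $s_0(x) \ne 0$. On the affine chart $\{w_0 \ne 0\}$ of $\PP(H^0_{(2)}(X,K_X)^*)$ the map $\iota_X$ is given in inhomogeneous coordinates by $z \mapsto \big(s_1^*(z)/s_0^*(z), s_2^*(z)/s_0^*(z), \dots\big)$, where $s_\alpha(w) = s_\alpha^*(w)\,dw$ in the local trivialization. The Fubini--Study potential $\log\!\big(1 + \sum_{\alpha\ge 1}|s_\alpha^*/s_0^*|^2\big)$ then pulls back to
$$ \log \sum_\alpha |s_\alpha^*(z)|^2 - \log|s_0^*(z)|^2 = \log \Bergman_X^*(z) - \log|s_0^*(z)|^2 . $$
Since $s_0^*$ is holomorphic and nonvanishing near $x$, the term $\log|s_0^*|^2$ is pluriharmonic, so $\IM\partial\overline{\partial}$ annihilates it and leaves exactly $\IM\partial\overline{\partial}\log\Bergman_X^* = ds^2_X$. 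This shows $\iota_X^*\w_{FS} = ds^2_X$ near $x$ (up to the normalization constant in $\w_{FS}$), independently of the chart by Remark \ref{remark on metric}.

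Next I would use that for a holomorphic map $f$ into a Hermitian manifold $(Y,\w)$ with $\w$ positive definite, and for a tangent vector $v$ at $x$,
$$ (f^*\w)(v,\overline{v}) = \w\big(df_x(v), \overline{df_x(v)}\big) \ge 0, $$
with equality if and only if $df_x(v) = 0$. Applying this pointwise identity with $f = \iota_X$ and $\w = \w_{FS}$, which is positive definite, shows that $ds^2_X = \iota_X^*\w_{FS}$ is positive definite at $x$ precisely when $d(\iota_X)_x$ is injective. Letting $x$ range over $X$ yields the claimed equivalence: $d\iota_X$ is injective everywhere if and only if $ds^2_X > 0$ everywhere, i.e. $X$ admits the Bergman metric.

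The main obstacle will be making the pullback identity rigorous when $N = \infty$, that is, when the target is an infinite-dimensional projective space: one must pin down the Fubini--Study metric on $\PP(H^0_{(2)}(X,K_X)^*)$ and verify that the local potential $\log\sum_\alpha|s_\alpha^*|^2 = \log\Bergman_X^*$ genuinely represents it and is smooth. The latter follows from $\Bergman_X^* > 0$ (base-point freeness) together with holomorphy in $z$ and anti-holomorphy in $w$ from Remark \ref{remark on kernel} (5) and local uniform convergence of the defining series. Once the target geometry is in place, the positivity argument itself is purely pointwise and finite-dimensional, since it only tests $df_x$ on the finite-dimensional space $T_xX$, so no further infinite-dimensional analysis is needed.
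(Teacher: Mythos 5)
Your proposal is correct, but it takes a genuinely different route from the paper's. You prove the global identity $ds^2_X = \iota_X^{*}\omega_{FS}$ via the Fubini--Study potential (pulling back $\log(1+\sum_{\alpha\ge 1}|w_\alpha|^2)$ to $\log\Bergman_X^{*} - \log|s_0^{*}|^2$ and killing the pluriharmonic term), and then invoke the general principle that the pullback of a positive definite Hermitian metric under a holomorphic map $f$ is positive semidefinite with kernel exactly $\ker df$. This is Kobayashi's original viewpoint, and it is precisely what the paper's introduction alludes to when it says the Bergman kernel is the pullback of the Fubini--Study metric and the Bergman metric is its curvature form. The paper's proof, by contrast, never touches the target geometry: it fixes $x\in X$ and $V\in T^{1,0}_x X$ and, using continuity of the evaluation functionals $f\mapsto f(x)$ and $f\mapsto Vf(x)$, chooses an orthonormal basis adapted to $(x,V)$ so that $s_\alpha(x)=0$ for $\alpha\ge 1$ and $Vs_\alpha(x)=0$ for $\alpha\ge 2$. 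In that basis every infinite sum collapses to at most two terms, giving the explicit formulas $ds^2_X(V,V) = |Vs_1(x)|^2/|s_0(x)|^2$ and $d\iota_X(V) = \left( Vs_1(x)/s_0(x), 0, 0, \cdots \right)$, so both quantities are nonzero exactly when $Vs_1(x)\neq 0$. What your approach buys is conceptual clarity and a statement valid for any holomorphic map into a projective Hilbert space; what it costs --- and what you rightly flag as the main obstacle --- is the need to set up Fubini--Study geometry on the possibly infinite-dimensional $\PP(H^0_{(2)}(X,K_X)^{*})$, holomorphy of $\iota_X$ into it, and local uniform convergence of $\sum_\alpha |s_\alpha^{*}|^2$ together with its derivatives. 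The paper's adapted-basis trick sidesteps all of these issues, since only $s_0$ and $s_1$ are \emph{active} at $(x,V)$ and the whole argument becomes a finite, pointwise computation. Both arguments are sound; yours is essentially Kobayashi's Theorem 8.2, while the paper's is a streamlined pointwise rewriting of it.
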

\begin{proof}
    For $x \in X$ and a non-zero vector $V \in T^{1,0}_x(X)$, we first choose an orthonormal basis $\{ s_{\alpha} \}_{\alpha=0}$ such that 
    $s_0(x) \neq 0$, $s_{\alpha}(x)=0$ for all $\alpha \ge 1$, and $Vs_1(x) \neq 0$, $Vs_{\alpha}(x) = 0$ for all $\alpha \ge 2$. 
    This is possible because the linear functionals $S : H^0_{(2)}(X, K_X) \rightarrow \CC$ and $S' : {\rm Ker}(S) \rightarrow \CC$ given by $f \mapsto f(x)$ and $f \mapsto Vf(x)$, respectively, are continuous.
    Then 
    \begin{align*}
        ds^2_X(V,V) &= \frac{\left(|Vs_0(x)|^2 + |Vs_1(x)|^2 \right) |s_0(x)|^2 - \left(s_0(x) \overline{Vs_0(x)}\right)\left(Vs_0(x) \overline{s_0(x)}\right)}{|s_0(x)|^4} \\
        &= \frac{|Vs_1(x)|^2}{|s_0(x)|^2}.
    \end{align*}
    Hence, $ds^2_X(V,V) > 0$ if and only if $Vs_1(x) \neq 0$.
    On the other hand, for $\iota_X = ( \frac{s_1}{s_0}, \frac{s_2}{s_0}, \cdots )$ with the same special orthonormal basis $\{ s_{\alpha} \}_{\alpha=0}$ as above,
    \begin{equation*}
        d\iota_X(V) = \left( \frac{Vs_1(x)}{s_0(x)}, 0, 0,  \cdots  \right).
    \end{equation*}
    Hence, $d\iota_X(V) \neq 0$ if and only if $Vs_1(x) \neq 0$.
\end{proof}

\begin{rmk} \label{rmk: separation points}
    There is also a criterion for the separation of points (i.e., for all $x, y \in X$ there exists a holomorphic section $s \in \Gamma_{(2)}(X, K_X)$ such that $s(x) = 0$ and $s(y) \neq 0$) in terms of the Bergman kernel. In \cite{Calabi53}, E. Calabi gave necessary and sufficient conditions for the isometric embeddings of complex manifolds in terms of the so-called {\it diastasis} function $D$.
    In case of the Bergman metric, the diastasis function is given by
    $$ D_X(x,y) = \log \frac{\Bergman^*_X(x,x) \Bergman^*_X(y,y)}{|\Bergman^*_X(x,y)|^2}  .$$
    X. Wang \cite{Wang15} showed that $K_X$ separates points if and only if $D_X(x,y) > 0$ for $x \neq y \in X$, provided that $\Bergman^*_X(x,y) \neq 0$, which is one of Calabi's conditions for isometric embeddings. 
\end{rmk}

\begin{proof}[Proof of Corollary \ref{thm C}]
    By Lemma \ref{lem: base-point free} and \ref{lem: immersion}, since a canonical map $\widetilde{X} \rightarrow \PP(H^0_{(2)}(\widetilde{X}, K_{\widetilde{X}})^*)$ is an immersion, $\phi_* \Bergman_{\widetilde{X}} (x,x) > 0$ and $\phi_* ds^2_{\widetilde{X}} > 0$ on $X$.
    Now, since $X$ is compact, the convergence of the Bergman kernels (Theorem \ref{thm: Bergman stable}) implies that there exists $N \in \NN$ such that 
    $\phi_{j*} \Bergman_{X_j} (x,x) > 0$ and $\phi_{j*} ds^2_{X_j} > 0$ on $X$ for all $j \ge N$.
    Therefore, we conclude that there exists an immersion $X_j \rightarrow \PP(H^0(X_j, K_{X_j})^*)$ for all $j \ge N$ by applying the above two lemmas again.  
\end{proof}

    Since we have a good criterion for the separation of points (Remark \ref{rmk: separation points}) and the convergence of the off-diagonal Bergman kernels (Theorem \ref{thm: Bergman stable}), it seems that, by using the same argument as above, one can prove the following statement;
    if $K_{\widetilde{X}}$ is very ample, then $K_{X_j}$ is also very ample for sufficiently large $j$.
    However, the above argument fails because we cannot push forward $\Bergman_{X_j}(x,y)$ (hence $D_{X_j}(x,y)$) to the base manifold $X$, which is compact. On the other hand, S.-K. Yeung \cite{Yeung00} showed that $K_{X_j}$ is also very ample for sufficiently large $j$ under the assumption that either the top manifold $\widetilde{X}$ is a Hermitian symmetric manifold of non-compact type or the base manifold $X$ is a K\"ahler manifold with negatively pinched sectional curvature. 
    In both cases, there is no $L^2$-harmonic forms on the universal covering $\widetilde{X}$ except for $L^2$-holomorphic $n$-forms which form an infinite-dimensional vector space. He (\cite{Yeung01}, \cite{Yeung17}) used the above fact crucially together with the heat kernel estimates based on Donnelly's results (\cite{Donnelly79}).

\bibliographystyle{abbrv}
\bibliography{reference}

\end{document}